\tikzstyle{bigbox} = [draw=blue!50, thick, rounded corners, rectangle]
\tikzset{
>=stealth'
}
\tikzset{
    old inner xsep/.estore in=\oldinnerxsep,
    old inner ysep/.estore in=\oldinnerysep,
    double circle/.style 2 args={
        circle,
        old inner xsep=\pgfkeysvalueof{/pgf/inner xsep},
        old inner ysep=\pgfkeysvalueof{/pgf/inner ysep},
        /pgf/inner xsep=\oldinnerxsep+#1,
        /pgf/inner ysep=\oldinnerysep+#1,
        alias=sourcenode,
        append after command={
        let     \p1 = (sourcenode.center),
                \p2 = (sourcenode.east),
                \n1 = {\x2-\x1-#1-0.5*\pgflinewidth}
        in
            node [inner sep=0pt, draw, circle, minimum width=2*\n1,at=(\p1),#2] {}
        }
    },
    double circle/.default={2pt}{blue}
}
\newcommand{\inj}{\hookrightarrow}
\newcommand{\lat}{N}
\newcommand{\latv}{M}
\newcommand{\scalp}[1]{\langle #1 \rangle}
\newcommand{\surj}{\twoheadrightarrow}
\newcommand{\CC}{\mathbb C}
\newcommand{\OO}{\mathcal O} 
\newcommand{\QQ}{\mathbb Q}
\newcommand{\ZZ}{\mathbb Z}
\definecolor{gold}{rgb}{0.9,0.77,0}      
\definecolor{tomato}{rgb}{1.0,.388,.278}      
\definecolor{DarkSalmon}{rgb}{.914,.588,.478} 
\definecolor{SaddleBrown}{rgb}{0.545,.271,.075}      
\definecolor{dirtyyellow}{rgb}{0.8,0.67,0}
\definecolor{yellow}{rgb}{0.8,0.67,0}
\definecolor{dirtyred}{rgb}{0.89,0.1,0.1}
\definecolor{red}{rgb}{0.89,0.1,0.1}
\definecolor{darkred}{rgb}{0.79,0.05,0.05}
\definecolor{dirtygreen}{rgb}{0.1,0.89,0.1}
\definecolor{green}{rgb}{0.1,0.89,0.1}
\definecolor{darkgreen}{rgb}{0.05,0.79,0.05}
\definecolor{dirtyblue}{rgb}{0.1,0.1,0.89}
\definecolor{klaus}{rgb}{.914,.588,.478} 
\newcommand{\define}[1]{{\it #1}}
\definecolor{quec}{rgb}{0.8,.000,.278}      
\definecolor{ansc}{rgb}{.200,0.7,.400}     
\definecolor{idec}{rgb}{.400,.000,.700}     
\definecolor{todo}{rgb}{.600,.600,.000}     
\newtheoremstyle{question}
  {2pt}   
  {2pt}   
  {\color{quec}\normalfont}  
  {0pt}       
  {\bfseries} 
  {.}         
  {5pt plus 1pt minus 1pt} 
  {}          
\newtheoremstyle{idea}
  {2pt}   
  {2pt}   
  {\color{idec}\normalfont}  
  {0pt}       
  {\bfseries} 
  {.}         
  {5pt plus 1pt minus 1pt} 
  {}          
\newtheoremstyle{answer}
  {2pt}   
  {2pt}   
  {\color{ansc}\normalfont}  
  {0pt}       
  {\bfseries} 
  {.}         
  {5pt plus 1pt minus 1pt} 
  {}          
\newtheoremstyle{todo}
  {2pt}   
  {2pt}   
  {\color{todo}\normalfont}  
  {0pt}       
  {\bfseries} 
  {.}         
  {5pt plus 1pt minus 1pt} 
  {}          
\theoremstyle{idea}
\theoremstyle{question}
\theoremstyle{answer}
\theoremstyle{todo}
\definecolor{hrcitecolor}{rgb}{0,0,0.6}
\definecolor{hrlinkcolor}{rgb}{0,0.3,0}
\newcounter{internal}[section]
\newaliascnt{intcor}{internal} 
\newaliascnt{intconj}{internal} 
\newaliascnt{intlemma}{internal} 
\newaliascnt{intdef}{internal} 
\newaliascnt{intex}{internal} 
\newaliascnt{intprop}{internal} 
\newaliascnt{intrem}{internal} 
\newaliascnt{intthm}{internal}
\newcommand*{\dupcntr}[2]{%
   \expandafter\let\csname c@#1\expandafter\endcsname\csname c@#2\endcsname
}
\theoremstyle{definition}
\newtheorem*{notation}{Notation}
\newtheorem{lemma}[intlemma]{Lemma}
\newtheorem{definition}[intdef]{Definition}
\newtheorem{example}[intex]{Example}
\newtheorem{prop}[intprop]{Proposition}
\theoremstyle{remark}
\newtheorem{remark}[intrem]{Remark}
\theoremstyle{plain}
\newtheorem{theorem}[intthm]{Theorem}
\DeclareMathOperator{\cone}{cone} 
\DeclareMathOperator{\conv}{conv} 
\DeclareMathOperator{\ext}{ext} 
\DeclareMathOperator{\Ext}{Ext} 
\DeclareMathOperator{\GL}{GL} 
\DeclareMathOperator{\Hom}{Hom} 
\DeclareMathOperator{\img}{im} 
\DeclareMathOperator{\incoming}{in_{\resg}} 
\DeclareMathOperator{\interior}{int} 
\DeclareMathOperator{\Spec}{Spec} 
\DeclareMathOperator{\Tor}{Tor} 
\DeclareMathOperator{\vertex}{\nu} 
\newcommand{\canonical}{K}
\newcommand{\cqs}{X}
\newcommand{\divi}{D}
\newcommand{\divz}{D'}
\newcommand{\field}{\CC}
\newcommand{\matlis}[1]{(#1)^{\vee}}
\newcommand{\mingens}[1]{G(#1)} 
\newcommand{\module}[1]{H_{#1}} 
\newcommand{\polyhedron}[1]{P_{#1}} 
\newcommand{\resg}{\mathcal Q}
\newcommand{\resgvert}{{\resg}_0}
\newcommand{\resgar}{{\resg}_1}
\newcommand{\rundiv}{W}
\newcommand{\short}[1]{\field\{#1\}}
\newcommand{\sigv}{\sigma^{\vee}}
\DeclareMathOperator{\abelow}{T} 
\DeclareMathOperator{\below}{E} 
\DeclareMathOperator{\id}{id}
\DeclareMathOperator{\tor}{tor}
\DeclareMathOperator{\universal}{link} 
\title{$\Ext$ and $\Tor$ on two-dimensional cyclic quotient singularities}
\author{Lars Kastner
\thanks{
The author is supported by the DFG (German research foundation) priority program SPP 1489.
}
\\Freie Universit\"at Berlin}
\date{}
\begin{document}

\maketitle
\setcounter{tocdepth}{1}

Given two torus invariant Weil divisors $\divi$ and $\divz$ on a
two-dimensional cyclic quotient singularity $\cqs$, the groups
$\Ext^i_{\cqs}(\OO(\divi),\OO(\divz))$, $i>0$, are naturally $\ZZ^2$-graded. We
interpret these groups via certain combinatorial objects using methods from
toric geometry. In particular, it is enough to give a combinatorial description
of the $\Ext^1$-groups in the polyhedra of global sections of the Weil divisors
involved.  Higher $\Ext^i$-groups are then reduced to the case of $\Ext^1$ via
a quiver. We use this description to show that $\Ext^1_{\cqs}(\OO(D),\OO(K-D'))
= \Ext^1_{\cqs}(\OO(D'),\OO(K-D))$, where $K$ denotes the canonical divisor on
$\cqs$. Furthermore, we show that $\Ext^{i+2}_{\cqs}(\OO(D),\OO(D'))$ is the Matlis dual of
$\Tor_{i}^{\cqs}(\OO(D),\OO(D'))$.

\section{Preliminaries}\label{prelim}
Let us start by recalling the basic definitions and notation. For
toric geometry we will follow \cite{cls} closely.

Given two coprime positive integers $0<q<n$, we can define a 2-dimensional
cyclic quotient singularity $\cqs$ by taking the quotient of $\CC^2$ by the
action of the cyclic subgroup of $\GL(2,\CC)$ generated by
\[
\left(
\begin{array}{cc}
\xi_n & 0\\
0 & \xi_n^q
\end{array}
\right),
\]
where $\xi_n$ denotes an $n$-th root of unity. Any 2-dimensional cyclic
quotient singularity arises in this manner.

The equivalent definition in terms of toric geometry is as follows: Let $\lat$
be a two-dimensional lattice and let $\latv=\Hom_{\ZZ}(\lat,\ZZ)$ be the dual lattice.
Now let $\lat_{\QQ}:=\lat\otimes_{\ZZ}\QQ$ and $\latv_{\QQ}$ be the associated
$\QQ$-vector spaces. We identify $\lat$ and $\latv$ with $\ZZ^2$ by choosing the usual
scalar product as pairing.

\begin{definition}
Given two integers $0<q<n$ such that $\gcd(q,n)=1$, we define the
\define{cyclic quotient singularity} $\cqs$ to be
\[
\cqs\ :=\ \Spec\field[\sigv\cap\latv],
\]
where $\sigma^\vee$ denotes the dual cone of
\[
\sigma\ :=\ \cone\left(\rho^0:=(1,0),\rho^1:=(-q,n)\right)\ \subseteq\ \lat_{\QQ}.
\]
By $R$ we denote the coordinate ring $\field[\sigv\cap\latv]$ of $\cqs$.
\end{definition}

Let us introduce a running example.
\begin{example}\label{n7q3:hilbert_basis}
Let $n=7$ and $q=3$.  Then the Hilbert basis of the dual cone $\sigma^\vee$ has four
elements, indicated by the dots in the picture.
\[
\begin{tikzpicture}[scale=.7]
\draw[step=1.0,black!20,thin] (-0.5,-0.5) grid (7.5,3.5);
\draw[thick] (0,3) -- (0,0) -- (7,3);
\fill (0,1) circle (2pt);
\fill (1,1) circle (2pt);
\fill (2,1) circle (2pt);
\fill (7,3) circle (2pt);
\node[anchor=south east] at (3,2) {$\sigma^\vee$};
\end{tikzpicture}
\]
Thus $R=\field[x^{[0,1]},x^{[1,1]},x^{[2,1]},x^{[7,3]}]$ or, if we label the axes
$x$ and $y$, $R=\field[y,xy,x^2y,x^7y^3]$.
\end{example}

\begin{remark}
There is a close relationship between the Hilbert basis of $\sigma^\vee$ and
the continued fraction expansion of $\frac{n}{n-q}$ discovered by
Riemenschneider (\cite{cqsgens}).  This has already lead to a very fruitful
discussion of their deformation theory in terms of chains representing zero
(\cite{stevens:91, christophersen:91}) and so-called p-resolutions
(\cite{altmann:98}). The connection of the $\Ext$ functor with the continued
fraction expansion of $\frac{n}{n-q}$ is part of \cite{thesis}.
\end{remark}

Any torus invariant Weil divisor is an integer linear combination of the orbits
corresponding to the rays of $\sigma$, denoted as $[\rho^0]$ and $[\rho^1]$. We
write $\divi=a_0[\rho^0] + a_1[\rho^1]$, $a_i\in\ZZ$.

\begin{notation}
Throughout this article, we will omit the $\OO$, for example we will write
$\Ext^i_{\cqs}(\divi,\divz)$ for $\Ext^i_{\cqs}(\OO(\divi),\OO(\divz))$.
\end{notation}

In order to study the
modules $\Ext^i_{\cqs}(\divi,\divz)$ we may instead study the modules
$\Ext^i_{R}(\Gamma(\cqs,\divi),\Gamma(\cqs,\divz))$, since $X$ is
affine. In the toric language the global sections of a torus invariant Weil
divisor are given by the section polyhedron.

\begin{definition}
For a torus invariant Weil divisor $\divi=a_0[\rho^0] + a_1[\rho^1]$, the
\define{section polyhedron} is given by
\[
\polyhedron{\divi}\ :=\ \{u\in\latv_{\QQ}\ |\ \scalp{u,\rho^i}\ge -a_i\}. 
\]
\end{definition}

The lattice points of the section polyhedron correspond to the homogeneous
global sections of $\OO(\divi)$. We use this to describe $H^0(\cqs,\OO(\divi))$ as a
divisorial ideal over $R$:
\[
\module{\divi}\ :=\ H^0(\cqs,\OO(\divi))\ =\ \bigoplus_{u\in\polyhedron{\divi}\cap\latv}\field\cdot\chi^u\ \subseteq\ \field[\latv].
\]
Keep in mind that $\divi$ is not necessarily Cartier. Hence the summation
$\divi+\divz$ of divisors does not translate to the multiplication of the
ideals, and in general we have
\[
\module{\divi+\divz}\subsetneq\module{\divi}\cdot\module{\divz}.
\]
Because $\sigma^\vee$ is simplicial, every $\polyhedron{\divi}$ will just be
the Minkowski sum of $\sigma^\vee$ and a rational vector
$\vertex(\divi)\in\QQ^2$. We write
\[
\polyhedron{\divi}\ =\ \vertex(\divi)+\sigma^\vee.
\]

The (minimal) homogeneous generators of the $R$-module correspond exactly to
the lattice points on the compact edges of
$\conv(\polyhedron{\divi}\cap\latv)$. Denote these lattice points as 
\[
\mingens{\divi}\ :=\ \{u\in\latv\ |\ u\mbox{ lies on a compact edge of }\conv(\polyhedron{\divi}\cap\latv)\}\ \subseteq\ \latv.
\]
\begin{remark}\label{gens:ineq}
If $\mingens{\divi}=\{u^0,\ldots,u^r\}$ we can sort these lattice points from
left to right, such that
\[
\scalp{u^0,\rho^0} < \scalp{u^1,\rho^0} <\ldots <\scalp{u^r,\rho^0}
\]
and
\[
\scalp{u^r,\rho^1} < \scalp{u^{r-1},\rho^1} <\ldots <\scalp{u^0,\rho^1}.
\]
\end{remark}

\begin{notation}
Throughout this paper we will use the following shorthand notation: Let $P$ be
a subset of $\latv_{\QQ}$, then we denote by $\short{P}$ the $R$-module
\[
\short{P}\ :=\ \bigoplus_{u\in P\cap\latv}\field\cdot\chi^u,
\]
with the multiplication
\[
x^w\cdot\chi^u\ :=\
\begin{cases}
\chi^{u+w} & u+w\in P\\
0 & \mbox{else}
\end{cases}.
\]
Note that only with this multiplication $\short{P}$ becomes a $\latv$-graded
$R$-module. Furthermore, this is not well-defined for every subset $P$ of
$\QQ^2$, rather one needs ``convexity'' of $P$ with respect to $\sigma^\vee$.
Thus, we require for $p\in P$ with $u+p\in P$ for $u\in\sigma^\vee$ that
$\lambda u+p\in P$ for $0\le \lambda \le 1$, which is actually a little stronger than needed.

As an example note that $\module{\divi} = \short{\polyhedron{\divi}}$.
\end{notation}

The class group of $\cqs$ is $\ZZ/n\ZZ$. By $0=E^0,\ldots, E^{n-1}$ we denote
the torus invariant divisors $E^i:=-i\cdot[\rho^0]$.  These divisors form a
system of representatives for the class group of $\cqs$.  Furthermore, let
$\canonical=-[\rho^0]-[\rho^1]$ be the canonical divisor. One can now calculate
the vertex of the canonical divisor to be
\[
\vertex(\canonical)\ =\ [1,\frac{q+1}{n}].
\]

\begin{example}\label{cqs:e1e3:picture}
We will consider the divisors \textcolor{red}{$E^1$} and
\textcolor{green}{$E^3$}. First we draw the corresponding polyhedra of global
sections.
\[
\begin{tikzpicture}[scale=.7]
\draw[step=1.0,black!20,thin] (-0.5,-0.5) grid (7.5,3.5);
\draw[thick] (0,3) -- (0,0) -- (7,3);
\fill[pattern color=green!20, pattern=north west lines] (2,3) -- (2,6/7) -- (7,3) -- cycle;
\fill[pattern color=red!20, pattern=north east lines] (3,3) -- (3,9/7) -- (7,3) -- cycle;
\draw[color=green] (2,3) -- (2,6/7) -- (7,3);
\fill[color=green] (2,6/7) node[anchor=north west] {\tiny{$\vertex(E^2)$}} circle (1pt);
\draw[color=red] (3,3) -- (3,9/7) -- (7,3);
\fill[color=red] (3,9/7) node[anchor=north west] {\tiny{$\vertex(E^3)$}} circle (1pt);
\fill[color=green] (2,1) circle (2pt);
\fill[color=green] (7,3) node[color=black, anchor=south west] {$[7,3]$} circle (4pt);
\fill[color=red] (3,2) circle (2pt);
\fill[color=red] (4,2) circle (2pt);
\fill[color=red] (7,3) circle (2pt);
\end{tikzpicture}
\]
Then we compute the generators of $\module{\textcolor{green}{E^2}}$ and
$\module{\textcolor{red}{E^3}}$. They are depicted as dots of the respective
colors. 
\end{example}

Our approach is to resolve such a divisorial ideal $\module{\divi}$
projectively in a Taylor-like fashion (\cite{taylor:66}). Choosing a minimal
generating set of syzygies, we arrive at a short exact sequence, which may be
seen as a slight generalization of cellular resolutions
(\cite[Ch. 4]{sturmfelsmiller}) to the coordinate rings of cyclic quotient
singularities.

\section{Resolving torus invariant divisors on CQS}\label{resolution}
A major obstacle, stemming from the
non-regularity of $R$, is the infiniteness of the complexes.
It turns out that the syzygies of the generators of $\module{\divi}$ are
isomorphic to a direct sum of divisorial ideals, i.e. global sections of other
torus invariant divisors. Finiteness of the class group yields that we can
encode all the information needed to freely resolve any $\module{\divi}$ in a
finite quiver $\resg$ with edges labelled by elements of $\latv$, effectively
overcoming the previously mentioned obstacle. This results in a recursive
formula for both $\Ext^i$ and $\Tor_i$, $i>1$, in terms of $\Ext^1$ and
$\Tor_1$, respectively.

Denote by $\{u^0,\ldots, u^r\}$ the generators $\mingens{\divi}$ of $\divi$.
Now take $\pi$ to be the canonical surjection 
\[
\bigoplus_{i=0}^rR[-u^i]\surj \module{\divi},\ e^i\mapsto x^{u^i}.
\]
For every pair $u^i$ and $u^{i+1}$ of consecutive generators, we can build an injective map into the kernel of $\pi$ in the following way:
\[
(x^{u^{i-1}})\cap (x^{u^i})\to \oplus_{i=0}^rR[-u^i],\ x^u\mapsto x^{u-u^i} e^i-x^{u-u^{i-1}}e^{i-1}.
\]
Denote by $\iota$ the map from the direct sum $\oplus_{i=1}^r (x^{u^{i-1}})\cap
(x^{u^i})$ of these ideals into $\oplus_{i=0}^rR[-u^i]$.

The following proposition may be seen as a generalization of \cite[Prop
3.1]{sturmfelsmiller}, which states that every monomial ideal in the polynomial
ring with two variables can be resolved freely in a short exact sequence, to
the singular case.

\begin{prop}\label{ses}
The sequence
\[
0 \to 
\bigoplus_{i=1}^r ((x^{u^{i-1}})\cap (x^{u^i})) \stackrel{\iota}{\inj} 
\bigoplus_{i=0}^rR[-u^i] \stackrel{\pi}{\surj} 
\module{\divi} \to 
0
\]
is exact. Furthermore, the fractional ideals $(x^{u^{i-1}})\cap (x^{u^i})$ are divisorial.
\end{prop}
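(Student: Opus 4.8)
The plan is to verify exactness of
\[
0 \to \bigoplus_{i=1}^r ((x^{u^{i-1}})\cap (x^{u^i})) \stackrel{\iota}{\inj} \bigoplus_{i=0}^rR[-u^i] \stackrel{\pi}{\surj} \module{\divi} \to 0
\]
degreewise in the $\latv$-grading, since all modules and maps are $\latv$-graded. Fix $w\in\latv$; I want to show the sequence of $\field$-vector spaces in degree $w$ is exact. Surjectivity of $\pi$ is immediate: the $u^i$ generate $\module{\divi}$ by definition of $\mingens{\divi}$. So the work is at the middle term: $\ker\pi = \img\iota$.

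The key geometric input is \autoref{gens:ineq}: the generators can be ordered so that $\scalp{u^i,\rho^0}$ is strictly increasing and $\scalp{u^i,\rho^1}$ is strictly decreasing. From this I would first compute, for consecutive generators, that $(x^{u^{i-1}})\cap(x^{u^i}) = \short{(u^{i-1}+\sigma^\vee)\cap(u^i+\sigma^\vee)}$, and that this intersection is again a translate of $\sigma^\vee$ (simpliciality of $\sigma^\vee$ plus the strict monotonicity forces $(u^{i-1}+\sigma^\vee)\cap(u^i+\sigma^\vee) = v^i + \sigma^\vee$ where $v^i$ has $\rho^0$-coordinate from $u^i$ and $\rho^1$-coordinate from $u^{i-1}$); this simultaneously proves the ``divisorial'' clause. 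Then, in a fixed degree $w$, the source $(\oplus_{i=0}^r R[-u^i])_w$ has basis $\{e^i : w - u^i \in \sigv\}$, and by the monotonicity the set of such $i$ is an \emph{interval} $\{a, a+1,\ldots,b\}$ — this is the crucial combinatorial fact: because $\scalp{u^i,\rho^0}$ increases and $\scalp{u^i,\rho^1}$ decreases, $w-u^i\in\sigv$ (i.e. $\scalp{w-u^i,\rho^0}\ge 0$ and $\scalp{w-u^i,\rho^1}\ge 0$) cuts out a consecutive block of indices. On such an interval, $\pi_w$ sends $\sum \lambda_i e^i \mapsto \sum\lambda_i \chi^w$ (with $\chi^w$ identified across the isomorphic one-dimensional pieces), so $\ker\pi_w$ is the ``sum-zero'' subspace, of dimension $b-a$ (or $0$ if the interval is empty). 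On the other hand $\iota_w$ lands in exactly the consecutive pairs $(x^{u^{i-1}})\cap(x^{u^i})$ with $a<i\le b$: the summand for index $i$ contributes in degree $w$ precisely when $w - v^i\in\sigv$, which by the coordinate description of $v^i$ is equivalent to $a\le i-1$ and $i\le b$. So $(\oplus_{i=1}^r\ldots)_w$ also has dimension $b-a$, its image under $\iota_w$ is spanned by the ``consecutive difference'' vectors $\chi^w e^i - \chi^w e^{i-1}$ for $a< i\le b$, these are linearly independent ($\iota$ injective in this degree) and manifestly span the sum-zero subspace. Hence $\img\iota_w = \ker\pi_w$, and injectivity of $\iota$ follows globally by running this over all $w$.

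For the injectivity of $\iota$ as a standalone statement one can argue more cheaply: the explicit formula $x^u \mapsto x^{u-u^i}e^i - x^{u-u^{i-1}}e^{i-1}$ shows the $i$-th component of $\iota$ already recovers the input from the $e^i$-coordinate alone, and distinct $i$ use distinct basis vectors $e^i$, so no cancellation across summands is possible; but doing it degreewise as above folds it into the main argument for free.

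The main obstacle I expect is the bookkeeping that turns ``$w-u^i\in\sigv$'' into ``$i$ ranges over an interval'' and the matching statement for the intersection-ideal summands — i.e. being careful that the two monotonicity chains of \autoref{gens:ineq} really do force the index set to be an interval and that the endpoints line up so that $\dim\ker\pi_w = \dim(\oplus\ldots)_w$. Everything else (the Minkowski-sum description $\polyhedron\divi = \vertex(\divi)+\sigv$, simpliciality of $\sigv$, the identification of intersections of translates of $\sigv$ with a single translate) is routine convexity in the plane. Once the interval structure is in place, exactness is essentially the exactness of the simplicial chain complex of an interval (a path graph), which is why the ``Taylor-like'' / cellular-resolution analogy with \cite[Prop 3.1]{sturmfelsmiller} works.
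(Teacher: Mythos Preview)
Your proof is correct and rests on the same geometric input as the paper's --- the monotonicity of \autoref{gens:ineq} --- but the packaging differs. The paper isolates a separate lemma (\autoref{homogeneous:onto}) showing that any homogeneous element of $\ker\pi$ lies in $\img\iota$ by an inductive ``peeling'' argument (observe that $u-u^1\in\sigv$ whenever $u-u^0,u-u^2\in\sigv$, then split off the last summand), and proves injectivity of $\iota$ separately by reducing modulo the ideal $(x^u-1:u\in\sigv\cap\latv)$ to land in $\field^{r+1}$, where the vectors $e^{i-1}-e^i$ are visibly independent. You instead make the interval structure of $\{i:w-u^i\in\sigv\}$ explicit from the start and read off both injectivity and $\img\iota_w=\ker\pi_w$ from a single dimension count in each degree; this is a little cleaner and makes the ``chain complex of a path graph'' picture (and hence the analogy with \cite[Prop.~3.1]{sturmfelsmiller}) transparent. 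The divisorial clause is handled the same way in both proofs.
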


Let us illustrate this proposition in the example.
\begin{example}\label{cqs:free_res:ex:res}
In our example $D=E^3$ is
an ideal of $R$, generated by $x^3y^2$, $x^4y^2$ and $x^7y^3$. We obtain
the following sequence
\[
\begin{tikzpicture}[scale=0.6]
\draw[step=1.0,black!20,thin] (-0.5,-0.5) grid (11.5,6.5);
\draw[thick] (0,6) -- (0,0) -- (11,3+4*3/7);
\fill[pattern color=red!20, pattern=north east lines] (3,6) -- (3,9/7) -- (11,3+4*3/7) -- (11,6) -- cycle;
\draw[color=red] (3,6) -- (3,9/7) -- (11,3+4*3/7);
\draw[color=red] (3,6) -- (3,2) -- (11,2+8*3/7);
\draw[color=red] (4,6) -- (4,2) -- (11,2+7*3/7);
\draw[color=red] (7,6) -- (7,3) -- (11,3+4*3/7);
\fill[pattern color=blue!20, pattern=north east lines] (4,6) -- (4,2+3/7) -- (11,2+8*3/7) -- (11,6) -- cycle;
\draw[color=blue] (4,6) -- (4,2+3/7) -- (11,2+8*3/7);
\fill[pattern color=cyan!20, pattern=north east lines] (7,6) -- (7,2+3*3/7) -- (11,2+7*3/7) -- (11,6) -- cycle;
\draw[color=cyan] (7,6) -- (7,2+3*3/7) -- (11,2+7*3/7);
\fill[color=red] (3,2) circle (2pt);
\fill[color=red] (4,2) circle (2pt);
\fill[color=red] (7,3) circle (2pt);
\fill[color=blue] (4,3) circle (2pt);
\fill[color=blue] (5,3) circle (2pt);
\fill[color=blue] (10,5) circle (2pt);
\fill[color=cyan] (7,4) circle (2pt);
\fill[color=cyan] (8,4) circle (2pt);
\fill[color=cyan] (11,5) circle (2pt);
\fill[color=red] (3,9/7) node[anchor=north west] {\tiny{$\vertex(E^3)$}} circle (1pt);
\fill[color=red] (7,3) node[anchor=north west] {\tiny{$[7,3]$}} circle (1pt);
\end{tikzpicture}
\]

\[
0\to
[\textcolor{blue}{(x^{[3,2]})\cap (x^{[4,2]})}]\oplus
[\textcolor{cyan}{(x^{[4,2]})\cap (x^{[7,3]})}]
\inj
R[-[3,2]]\oplus R[-[4,2]]\oplus R[-[7,3]]
\surj \module{E^3} \to 0
\]
One immediately recognizes the summands of the first term as divisorial ideals, i.e.
\[
[\textcolor{blue}{(x^{[3,2]})\cap (x^{[4,2]})}]\oplus
[\textcolor{cyan}{(x^{[4,2]})\cap (x^{[7,3]})}]
=
\textcolor{blue}{\module{E^1[-[3,2]]}}\oplus\textcolor{cyan}{\module{E^3[-[4,2]]}}.
\]
\end{example}

We begin the proof of \autoref{ses} by showing that all homogeneous
elements of $\ker{\pi}$ are in the image of $\iota$.
\begin{lemma}\label{homogeneous:onto}
Let $\underline{a}\in\ker{\pi}\subseteq \oplus_{i=0}^rR[-u^i]$ be a
homogeneous element of degree $u\in\latv$. Then there are $b_i\in \field$,
$i=1,\ldots, r$ such that
\[
\underline{a}=\sum_{i=1}^r b_i\cdot(x^{u-u^i}e^{i}-x^{u-u^{i-1}}e^{i-1}),
\]
with $u-u^i\in\sigv$ whenever $b_i\not= 0$. In particular, $\underline{a}\in\img{\iota}$.
\end{lemma}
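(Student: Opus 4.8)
The plan is to reduce everything to a finite-dimensional linear-algebra problem inside the single graded piece of degree $u$. First I would write down the degree-$u$ component of the free module $\bigoplus_{i=0}^r R[-u^i]$: since $R_v=\field\cdot\chi^v$ for $v\in\sigv\cap\latv$ and is $0$ otherwise, this component is spanned by the monomials $x^{u-u^i}e^i$ for which $u-u^i\in\sigv$. Collecting the indices with this property into a set $S\subseteq\{0,\ldots,r\}$, any homogeneous element of degree $u$ can be written $\underline a=\sum_{i\in S}a_i\,x^{u-u^i}e^i$ with $a_i\in\field$. Applying $\pi$ gives $\pi(\underline a)=\big(\sum_{i\in S}a_i\big)\chi^u$, and for any $i\in S$ we have $u=u^i+(u-u^i)\in\polyhedron{\divi}+\sigv=\polyhedron{\divi}$, so $\chi^u$ is a nonzero basis vector of $\module{\divi}$. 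Hence $\underline a\in\ker\pi$ is equivalent to $\sum_{i\in S}a_i=0$; the case $S=\emptyset$ forces $\underline a=0$, which lies in $\img\iota$ trivially.

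The combinatorial heart of the argument is the observation that $S$ is an \emph{interval} $\{k,k+1,\ldots,l\}$. Indeed $u-u^i\in\sigv$ is equivalent to the two inequalities $\scalp{u^i,\rho^0}\le\scalp{u,\rho^0}$ and $\scalp{u^i,\rho^1}\le\scalp{u,\rho^1}$; by \autoref{gens:ineq} the first holds for an initial segment of indices and the second for a final segment, so their intersection is contiguous.

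With this in hand I would finish by telescoping. For $k<i\le l$ both $u-u^i$ and $u-u^{i-1}$ lie in $\sigv$, so the syzygy image $g_i:=x^{u-u^i}e^i-x^{u-u^{i-1}}e^{i-1}$ lies in the degree-$u$ part of $\img\iota$. These $l-k$ vectors are manifestly linearly independent and lie in the ``coefficient sum $=0$'' hyperplane, which also has dimension $l-k$, hence they span it. Concretely, setting $b_i:=\sum_{j=i}^{l}a_j$ for $k<i\le l$ and $b_i:=0$ otherwise, a telescoping computation using $\sum_{j=k}^{l}a_j=0$ yields $\underline a=\sum_{i=k+1}^{l}b_i g_i=\sum_{i=1}^{r}b_i g_i$. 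Since $b_i\ne 0$ forces $i\in\{k+1,\ldots,l\}\subseteq S$, we have $u-u^i\in\sigv$ exactly where required, and $\underline a\in\img\iota$.

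I expect the only genuinely delicate points to be the two facts that make the reduction work: that $\chi^u$ is nonzero in $\module{\divi}$ (so the kernel condition collapses to the single scalar equation $\sum a_i=0$), and --- via \autoref{gens:ineq} --- that the support set $S$ is an interval; without contiguity the telescoping would not close up. The degenerate cases $|S|\le 1$ need only a one-line remark.
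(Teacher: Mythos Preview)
Your proof is correct and follows essentially the same approach as the paper: both arguments hinge on \autoref{gens:ineq} to show that the set of indices $i$ with $u-u^i\in\sigv$ is contiguous, and then decompose $\underline a$ as a sum of the consecutive syzygies $x^{u-u^i}e^i-x^{u-u^{i-1}}e^{i-1}$. The only difference is packaging: the paper peels off one syzygy at a time by induction on $r$ (illustrated for $r=2$), whereas you handle general $r$ in one stroke via the explicit telescoping formula $b_i=\sum_{j\ge i}a_j$, which is arguably cleaner.
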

\begin{proof}
For the cases $r=1$ there is nothing to prove. We prove the lemma
explicitly for the case $r=2$. The methods used in that case are the same as
for the induction step in the general case, since we just show how to split off
the last summand.

Because $\underline{a}$ is homogeneous, every entry of it is a multiple of a
monomial. Hence, take $\underline{a}$ to be
\[
\underline{a}\ =\ (a_0x^{u-u^0},\ a_1x^{u-u^1},\ a_2x^{u-u^2}).
\]
Furthermore, assume $a_0\not=0\not=a_2$, otherwise we are done. This implies
$u-u^0\in\sigv$ and $u-u^2\in\sigv$. Together with the inequalities
of \autoref{gens:ineq} we get
\begin{gather*}
0\le\scalp{u-u^0,\rho^0}<\scalp{u-u^1,\rho^0}<\scalp{u-u^2,\rho^0}\\
0\le\scalp{u-u^2,\rho^1}<\scalp{u-u^1,\rho^1}<\scalp{u-u^0,\rho^1}.
\end{gather*}
Therefore, $u-u^1\in\sigv$ and we split $\underline{a}$ into the following
\begin{align*}
\underline{a}\ &=\ (a_0x^{u-u^0},\ a_1x^{u-u^1},\ a_2x^{u-u^2})\\
&=\ (a_0x^{u-u^0},\ (a_1-a_2) x^{u-u^1},0)\ +\ (0,\ a_2x^{u-u^1},\ -a_2x^{u-u^2}).
\end{align*}
The second summand is clearly an element of both $\ker{\pi}$ and $\img{\iota}$.
We conclude $a_1-a_2=-a_0$ and hence, we are done.
\end{proof}

\begin{proof}[Proof of \autoref{ses}]
It is already clear that $\pi$ is surjective.

We already know that $\iota$ is injective on the direct summands. Hence, an
homogeneous element of degree $u$ of the kernel of $\iota$ gives rise to an
equation
\[
\sum_{i=1}^r a_i(x^{u-u^{i-1}}e^{i-1}-x^{u-u^i}e^i),
\]
in $R^{\#\mingens{D}}$, with $a_i\in\field$ and $a_i=0$ for $x^u\notin
((x^{u^{i-1}})\cap (x^{u^i}))$. Now we factor out
\[
I\ =\ (x^u-1\ |\ u\in\sigv\cap\latv),
\]
and consider the equation over $R/I\cong \field$. But the vectors
$e^{i-1}-e^i\in\field^{\#\mingens{D}}$ are linearly independent and hence, $a_i=0$
for $i=1,\ldots,r$. Factoring out $I$ did not change the $a_i$, thus, $\iota$
must be injective.

All modules are $\latv$-graded and the maps are homogeneous of degree $0$.
Therefore $\ker{\pi}$ is a $\latv$-graded submodule of $\oplus_{i=0}^rR[-u^i]$. In
particular, $\ker{\pi}$ can be generated by homogeneous elements and thus,
\autoref{homogeneous:onto} implies $\img{\iota}=\ker{\pi}$.

The final claim is that $((x^{u^{i-1}})\cap (x^{u^i}))$ is
divisorial. Take the polyhedron
\[
(u^{i-1}+\sigv)\cap (u^i+\sigv).
\]
Its lattice points correspond exactly to the monomials in $((x^{u^{i-1}})\cap
(x^{u^i}))$. Furthermore it is the polyhedron of the divisor
$-\scalp{u^{i-1},\rho^0}[\rho^0]-\scalp{u^{i},\rho^1}[\rho^1]$.
\end{proof}

\begin{remark}
An alternative proof of \autoref{ses} can be given by using a 
modification of the criterion for exactness of \cite[Lemma 2.2]{bps:monres}.
One just needs to replace the expression of the least common multiple of two
monomials $x^u$ and $x^v$ by the intersection of the corresponding principal
ideals $(x^u)\cap(x^v)$. Using this approach, the above sequence is a
subcomplex of the Taylor resolution for the divisorial ideal $\module{\divi}$.
\end{remark}

Applying \autoref{ses} recursively, we can construct free resolutions of
$\divi$ up to any desired length. Taking into account the finiteness of the
class group of $\cqs$, we can encode the information of \autoref{ses} for
different divisors in a quiver.

For each $E^i$, \autoref{ses} gives a sequence
\[
0\to\bigoplus_{j=1}^{r(E^i)}E^{k_j}[-v^j]\inj\bigoplus_{j=0}^{r(E^i)}R[-u^j]\surj E^i\to 0,
\]
where $\{u^0,\ldots,u^{r(E^i)}\}=\mingens{E^i}$ and $v^j\in\latv$.

Let us introduce the labelled quiver $\resg$: It consists of an ordinary
quiver, i.e. a set of vertices $\resgvert$, a set of arrows $\resgar$ and two
functions $s,t:\resgar\to\resgvert$ returning the source and the target of an
arrow. Additionally we have a function $l:\resgar\to\latv$ equipping each arrow
with a label.

\begin{definition}
Take $\resg$ to be the quiver with vertices $\resgvert:=\{E^0,\ldots,
E^{n-1}\}$. For every direct summand $E^{k_j}[-v^j]$ in the above sequence we
add an arrow $a$ to $\resgar$ such that
\[
s(a) :=E^{k_j},\ t(a):= E^i,\ l(a):= v^j. 
\]
We do this for all such exact sequences for $i=0,\ldots,n-1$.
\end{definition}

Furthermore, for an arbitrary Weil divisor $\divi$ we define the sources of the
incoming arrows in $\resg$. This definition takes care of shifting all divisors
in the right way.

\begin{definition}
Let $E^i$ be the divisor linearly equivalent to $\divi$, i.e. $\divi=E^i[-u]$ for $u\in\latv$. Then we define
\[
\incoming{\divi}\ :=\ \{E^j[-v^j-u]\ |\ \exists\ a\in\resgar \mbox{ with }
s(a)=E^j,\ t(a)=E^i,\ l(a)=v^j\}.
\]
\end{definition}
The sequence of \autoref{ses} now becomes
\[
0\to \bigoplus_{G\in\incoming{\divi}}\module{G}\inj R^{\#\mingens{\divi}}\surj \divi\to 0,
\]
with the grading imposed by the generators of $\divi$ on the middle module.

\begin{example}\label{n7q3:ext_graph}
In the running example with $n=7$ and $q=3$ the quiver $\resg$ looks as follows:
\[
\begin{tikzpicture}[scale=.8]
\def \radius {10pt};
\node[draw, circle] (E7) at (0,0) {$E^7$};
\node[draw, circle] (E1) at (5,1) {$E^1$};
\node[draw, circle] (E2) at (8,-2) {$E^2$};
\node[draw, circle] (E3) at (2,0) {$E^3$};
\node[draw, circle] (E4) at (2,-2) {$E^4$};
\node[draw, circle] (E5) at (5,-3) {$E^5$};
\node[draw, circle] (E6) at (8,0) {$E^6$};
\path[->] (E1) edge node[anchor=south, font=\tiny] {$[6,3]$} (E6);
\path[->] (E1) edge [bend right = 10] node[anchor=east, font=\tiny] {$[5,3]$} (E5);
\path[->] (E1) edge [bend right = 60] node[anchor=east, font=\tiny] {$[6,3]$} (E5);
\path[->] (E3) edge node[anchor=east, font=\tiny] {$[4,2]$} (E4);
\path[->] (E1) edge node[anchor=south, font=\tiny] {$[3,2]$} (E3);
\path[->] (E3) edge [loop above] node[anchor=south, font=\tiny] {$[4,2]$} (E3);
\path[->] (E5) edge node[anchor=south, font=\tiny] {$[2,1]$} (E2);
\path[->] (E1) edge [loop above] node[anchor=south, font=\tiny] {$[1,1]$} (E1);
\path[->] (E5) edge [bend right] node[anchor=west, font=\tiny] {$[2,1]$} (E1);
\end{tikzpicture}
\]
One immediately recognizes the first term in the sequence for $E^3$ given in \autoref{cqs:free_res:ex:res} via
\[
\incoming(E^3)\ =\ \{E^3[-[4,2]],\ E^1[-[3,2]]\}.
\]
One can use this construction to define a free resolution of $E^3$ recursively.
This resolution will even be minimal. However, the quiver $\resg$ is the more
elegant way of dealing with the infiniteness of these resolutions.
\end{example}

For the case of $\cqs$ being Gorenstein we recover a well-known result by
Eisenbud:

\begin{remark}
Building the quiver $\resg$ for the case $q=n-1$ one notices that it consists
of $\lfloor\frac{n-1}{2}\rfloor$ disjoint cycles of length $2$. For $n$ being
even, one of the cycles will have length one, i.e. it is a loop. In particular,
for every non-trivial divisor $\divi$ the direct sum at the beginning of the
exact sequence of \autoref{ses} has exactly one summand. This means that the
resolution of the divisorial ideal $\module{\divi}$ is $2$-periodic, which was
already observed by Eisenbud in \cite{eisenbud1980homological} and is due to
$\cqs$ being Gorenstein for this choice of $n$ and $q$ and $\module{\divi}$ being MCM.
\end{remark}

As a final remark, we can use the quiver $\resg$ to construct higher $\Ext^i$
and $\Tor_i$ recursively.

\begin{remark}\label{recursion}
For $\Ext$ and $\Tor$ we obtain the following formulas using \autoref{ses},
applying $\Hom(\bullet,\divz)$ and taking the long exact sequence of
cohomology:
\[
\Ext^{n+1}(\divi,\divz)\ =\ \bigoplus_{G\in\incoming{\divi}}\Ext^n(G,\divz)
\]
and
\[
\Tor_{n+1}(\divi,\divz)\ =\ \bigoplus_{G\in\incoming{\divi}}\Tor_n(G,\divz)
\]
for $n>0$.
\end{remark}
Thus, we only need to know how to compute $\Ext^1$ and $\Tor_1$.

\section{\texorpdfstring{$\Ext^1$}{Ext1}}\label{ext1:section}
Applying $\Hom(\bullet,\divz)$ to the short exact sequence of \autoref{ses} for
$\divi$, we may consider the long exact sequence of cohomology. The first part
gives a formula for $\Ext^1(\divi,\divz)$. In this section we will rephrase this
formula in combinatorial terms.

To a Weil divisor $\divi$ define the following set
\begin{definition}
\[
\below(\divi)\ :=\ \interior(\polyhedron{\divi})\backslash\bigcup_{u\in\mingens{\divi}} u+\interior(\sigv).
\]
\end{definition}

\begin{example}
We draw the sets $\below(E^i)$ for $\textcolor{red}{E^3}$ and $\textcolor{green}{E^2}$ in the running example with $n=7$ and $q=3$:
\[
\begin{tikzpicture}[scale=.7]
\draw[step=1.0,black!20,thin] (-0.5,-0.5) grid (7.5,3.5);
\draw[thick] (0,3) -- (0,0) -- (7,3);
\draw[color=green] (2,3) -- (2, 1);
\draw[color=red] (3,3) -- (3, 2);
\fill[pattern color=red!20, pattern=north west lines] (3,9/7) -- (3,2) -- (4,2+3/7) -- (4,2) -- (7,2+9/7) -- (7,3) -- cycle;
\fill[pattern color=green!20, pattern=north east lines] (2,6/7) -- (7,3) -- (7,3+1/7) -- (2,1) -- cycle;
\draw[color=red, thick] (3,2) -- (4,2+3/7) -- (4,2) -- (7,2+9/7) -- (7,3);
\draw[color=green, thick] (7,3) -- (7,3+1/7) -- (2,1);
\draw[color=green, thick, dashed] (2,1) -- (2,6/7) -- (7,3);
\draw[color=red, thick, dashed] (7,3) -- (3,9/7) -- (3,2);
\end{tikzpicture}
\]
Remember that the leftmost and the bottom boundary do not belong to the
$\below$ sets, this is indicated by the dashed lines. Since
$\textcolor{green}{E^2}$ only has two generators, its $\below$ set looks like a
parallelepiped. On the other hand, $\textcolor{red}{E^3}$ is generated by three
elements and hence, its $\below$ set has one 'dent'.
\end{example}

Let us now establish the link of $\below(\divi)$ with $\Ext^1(\divi,\divz)$.
\begin{prop}\label{ext1}
For two Weil divisors $\divi$ and $\divz$ define
\[
\ext(\divi,\divz)\ :=\ -(\below(\divi)-\vertex(\divz)).
\]
Then
\[
\Ext^1(\divi,\divz)\ =\ \short{\ext(\divi,\divz)}.
\]
\end{prop}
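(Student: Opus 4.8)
The plan is to apply $\Hom_R(\bullet,\module{\divz})$ to the short exact sequence of \autoref{ses} for $\divi$, read off $\Ext^1$ from the long exact cohomology sequence, and then identify the resulting cokernel degree-by-degree with the graded module $\short{\ext(\divi,\divz)}$. Writing $\mingens{\divi}=\{u^0,\ldots,u^r\}$, the sequence of \autoref{ses} gives
\[
\Hom(R^{\#\mingens{\divi}},\module{\divz}) \to \bigoplus_{i=1}^r\Hom\bigl((x^{u^{i-1}})\cap(x^{u^i}),\module{\divz}\bigr) \to \Ext^1(\divi,\divz)\to 0,
\]
since $\bigoplus_i R[-u^i]$ is free, hence $\Ext^1(\bigoplus_i R[-u^i],\module{\divz})=0$. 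So $\Ext^1(\divi,\divz)$ is the cokernel of the restriction map $\varphi$ from $\bigoplus_{i=0}^r\module{\divz}[u^i]$ (a homomorphism out of $R[-u^i]$ is determined by the image of $e^i$, which may be any element of $\module{\divz}$ in degree shifted by $u^i$) to $\bigoplus_{i=1}^r\Hom((x^{u^{i-1}})\cap(x^{u^i}),\module{\divz})$. The first task is to compute each Hom-term: since $(x^{u^{i-1}})\cap(x^{u^i})$ is the divisorial ideal $\module{G_i}$ with $G_i$ the divisor having vertex $\vertex(G_i)$ determined (as in the proof of \autoref{ses}) by $\scalp{\cdot,\rho^0}=\scalp{u^{i-1},\rho^0}$ and $\scalp{\cdot,\rho^1}=\scalp{u^i,\rho^1}$, one identifies $\Hom_R(\module{G_i},\module{\divz})$ with $\short{\polyhedron{\divz}-\vertex(G_i)}$, i.e. in degree $w$ it is nonzero iff $w+\vertex(G_i)+\sigv\subseteq\polyhedron{\divz}$, equivalently $w+\polyhedron{G_i}\subseteq\polyhedron{\divz}$ (a homomorphism of rank-one reflexive modules over a normal domain is multiplication by a rational monomial that must carry one polyhedron into the other).

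Next I would make the boundary map $\varphi$ explicit in each degree $w\in\latv$. The dual of $\iota$ sends a tuple $(g_i)_i\in\bigoplus_i\module{\divz}[u^i]$ to the family whose $i$-th component is the difference $x^{-u^i}g_i-x^{-u^{i-1}}g_{i-1}$ viewed inside $\Hom(\module{G_i},\module{\divz})$ — this is exactly the dual of $x^u\mapsto x^{u-u^i}e^i-x^{u-u^{i-1}}e^{i-1}$. So, degree by degree, $\Ext^1(\divi,\divz)_w$ is a quotient of $\bigoplus_{i=1}^r (\text{possible values in the }i\text{-th Hom-term})$ by the image of a ``telescoping/difference'' map out of $\bigoplus_{i=0}^r(\text{possible values of }g_i)$. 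This is a purely combinatorial cokernel of a path-graph incidence-type map, and its dimension in degree $w$ should come out to be $1$ precisely when a certain incompatibility among the lattice translates occurs, and $0$ otherwise. The key identity to establish is: $\dim_\field\Ext^1(\divi,\divz)_w=1$ if and only if $-w+\vertex(\divz)\in\below(\divi)$, i.e. $w\in\ext(\divi,\divz)$, and $0$ otherwise; and then to check that the $R$-module structure on the cokernel matches the one on $\short{\ext(\divi,\divz)}$ (multiplication by $x^v$ shifts degree and is forced to be either the obvious isomorphism or zero by convexity of the $\below$ set relative to $\sigv$).

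The main obstacle, I expect, is the combinatorial bookkeeping in the cokernel computation: one must track, for a fixed degree $w$, which of the $r$ target summands are nonzero (this depends on whether $w+\polyhedron{G_i}\subseteq\polyhedron{\divz}$) and which of the $r+1$ source summands are nonzero (whether $w+u^i$-shifted copy of $\polyhedron{\divz}$ contains the relevant cone), and then show the difference map has corank exactly $\dim\Ext^1_w$. The geometry is: $\below(\divi)=\interior(\polyhedron{\divi})\setminus\bigcup_i(u^i+\interior\sigv)$ consists of the ``dents'' left between consecutive generator cones inside the interior of $\polyhedron{\divi}$; a point lying in a dent corresponds precisely to a degree where two consecutive Hom-conditions hold but the ``gluing'' $g_i$ that would kill the difference cannot be chosen inside $\module{\divz}$ because the intermediate cone $u^i+\sigv$ is too big — exactly the failure captured by $E(\divi)$. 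Translating by $-\vertex(\divz)$ and negating accounts for the $-(\below(\divi)-\vertex(\divz))$ in the definition of $\ext$. I would handle the $r=2$ case first (a single dent, single difference equation — essentially the computation already modeled in the proof of \autoref{homogeneous:onto}) and then note the general case follows since the path graph decomposes the difference map into independent blocks, one per dent, each isomorphic to the $r=2$ situation.
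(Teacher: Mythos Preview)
Your approach is essentially the same as the paper's: both apply $\Hom(\bullet,\divz)$ to the short exact sequence of \autoref{ses}, identify each $\Hom$-term with a shifted copy of $\sigv$ (your $\short{\polyhedron{\divz}-\vertex(G_i)}$ is exactly the paper's \autoref{hom}), and then read off $\Ext^1$ as the cokernel of the dual of $\iota$. The only difference is organizational: where you propose a degree-by-degree corank computation for the path-graph difference map, the paper first rewrites $\ext(\divi,\divz)$ as $\bigcup_{i=1}^r(-v^i+\vertex(\divz)+\sigv)\setminus\bigl[(-u^0+\vertex(\divz)+\sigv)\cup(-u^r+\vertex(\divz)+\sigv)\bigr]$ and then observes directly that the interior source summands glue adjacent target cones while the two boundary source summands kill the end cones---this sidesteps the bookkeeping you flag as the main obstacle.
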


We start by proving a lemma about the support of the $\latv$-graded module
$\Hom(\divi, \divz)$.
\begin{lemma}\label{hom}
Denote by
\[
\hom(\divi,\divz)\ :=\ -\vertex(\divz)+\polyhedron{\divz}.
\]
Then
\[
\Hom(\divi,\divz)\ =\ \short{\hom(\divi,\divz)}.
\]
In particular, the module $\Hom(\divi,\divz)$ is a divisorial ideal.
\end{lemma}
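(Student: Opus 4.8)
The plan is to realise $\Hom(\divi,\divz)$ as an ideal quotient and thereby reduce the statement to a polyhedral computation. As $\cqs$ is affine, $\Hom(\divi,\divz)=\Hom_R(\module{\divi},\module{\divz})$; since $\module{\divi}$ and $\module{\divz}$ are rank-one fractional ideals sitting in $\field[\latv]$, every $R$-linear map between them is multiplication by a fixed element of the fraction field $\field(\latv)$ of $R$ (namely $\phi(\chi^{u})/\chi^{u}$ for any chosen $u\in\polyhedron{\divi}\cap\latv$), so
\[
\Hom(\divi,\divz)\ =\ (\module{\divz}:\module{\divi})\ =\ \{\,f\in\field(\latv)\ \mid\ f\cdot\module{\divi}\subseteq\module{\divz}\,\}.
\]
This is $\latv$-graded, and because every graded piece of $\module{\divz}$ is at most one-dimensional, its degree-$w$ part equals $\field\cdot\chi^{w}$ when $\chi^{w}\cdot\module{\divi}\subseteq\module{\divz}$ and is $0$ otherwise. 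Hence everything reduces to determining $S:=\{\,w\in\latv\mid w+(\polyhedron{\divi}\cap\latv)\subseteq\polyhedron{\divz}\,\}$.

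To compute $S$, I would first pass to the minimal generators: each lattice point of $\polyhedron{\divi}$ lies in $u^{i}+\sigv$ for some $u^{i}\in\mingens{\divi}=\{u^{0},\dots,u^{r}\}$, and $\polyhedron{\divz}=\vertex(\divz)+\sigv$ is stable under adding elements of $\sigv$, so $w\in S$ precisely when $w+u^{i}\in\polyhedron{\divz}$ for all $i$. With $\polyhedron{\divz}=\{v\mid\scalp{v,\rho^{j}}\ge\scalp{\vertex(\divz),\rho^{j}},\ j=0,1\}$ this is the system $\scalp{w,\rho^{j}}\ge\scalp{\vertex(\divz)-u^{i},\rho^{j}}$ over all $i$; by the monotonicity of \autoref{gens:ineq} the $j=0$ inequality is sharpest at $i=0$ and the $j=1$ inequality at $i=r$. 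Finally $u^{0}$ and $u^{r}$ lie on the two facets of $\polyhedron{\divi}$ — i.e. $\scalp{u^{0},\rho^{0}}=\scalp{\vertex(\divi),\rho^{0}}$ and $\scalp{u^{r},\rho^{1}}=\scalp{\vertex(\divi),\rho^{1}}$ — being the minimal generators at which the compact-edge chain of $\conv(\polyhedron{\divi}\cap\latv)$ meets the two unbounded edges (which run along the rays of $\sigv$). Substituting, $S$ is the set of lattice points of
\[
\{\,w\mid\scalp{w,\rho^{j}}\ge\scalp{\vertex(\divz)-\vertex(\divi),\rho^{j}},\ j=0,1\,\}\ =\ -\vertex(\divi)+\polyhedron{\divz}\ =\ \polyhedron{\divz-\divi},
\]
which is $\hom(\divi,\divz)$. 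As this polyhedron is a translate of $\sigv$, it is $\sigv$-stable, so the truncated multiplication defining $\short{\hom(\divi,\divz)}$ is the one induced from $\field(\latv)$; therefore $\Hom(\divi,\divz)=\short{\hom(\divi,\divz)}=\module{\divz-\divi}$, which is a divisorial ideal.

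The ideal-quotient identification and the half-space bookkeeping are routine; the points that need genuine care are the passage from all lattice points of $\polyhedron{\divi}$ to the finitely many generators $u^{i}$ (which uses only that the $u^{i}$ generate $\module{\divi}$ over $R$ and that $\polyhedron{\divz}$ is $\sigv$-stable, and so is robust) and the facet membership of $u^{0}$ and $u^{r}$; for the latter I would use that $\scalp{\vertex(\divi),\rho^{j}}\in\ZZ$ and that $\rho^{0},\rho^{1}$ are primitive, so each facet of $\polyhedron{\divi}$ contains lattice points and $u^{0},u^{r}$ are the extreme ones. An alternative route, closer to the resolution machinery, is to apply $\Hom(\bullet,\divz)$ to the short exact sequence of \autoref{ses} and read off the kernel, which yields the same polyhedral description once $\Hom(R[-u^{i}],\divz)$ is identified with a degree shift of $\module{\divz}$.
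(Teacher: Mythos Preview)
Your proof is correct and follows the same strategy as the paper: identify homogeneous $R$-homomorphisms $\module{\divi}\to\module{\divz}$ with multiplication by monomials $\chi^{w}$ (the paper phrases this as ``any non-zero map must be injective, hence determined by the image of one element''; you phrase it as an ideal quotient inside $\field(\latv)$), and then determine the admissible $w$ polyhedrally. The paper is terser at the final step, simply asserting that $\{u\in\latv\mid u+\polyhedron{\divi}\subseteq\polyhedron{\divz}\}$ is the lattice-point set of $\hom(\divi,\divz)$, whereas you work this out via the generators $u^{0},\dots,u^{r}$, the monotonicity of \autoref{gens:ineq}, and the facet membership of $u^{0}$ and $u^{r}$. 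That detour is sound but can be shortened: since $\polyhedron{\divi}=\vertex(\divi)+\sigv$ and $\polyhedron{\divz}$ is $\sigv$-stable, the containment $w+(\polyhedron{\divi}\cap\latv)\subseteq\polyhedron{\divz}$ is already equivalent to $w+\vertex(\divi)\in\polyhedron{\divz}$, giving $-\vertex(\divi)+\polyhedron{\divz}=\polyhedron{\divz-\divi}$ in one line without invoking $\mingens{\divi}$ at all.
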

\begin{proof}
We know that $\Hom(\divi,\divz)$ is $\latv$-graded and thus, generated by
homogeneous elements. Furthermore, we note that any non-zero map $\divi\to\divz$
must be injective, implying that the image of any non-zero element of $\divi$
completely determines the map. Hence, non-zero homogeneous maps $\divi\to\divz$
are given as $\field$-multiples of multiplication of $\divi$ by monomials
$x^u\in\field[\latv]$ such that $x^u\cdot\divi\subseteq\divz$. This leaves us with
determining all the lattice points
\[
\{u\in\latv\ |\ \polyhedron{\divi}+u\subseteq\polyhedron{\divz}\}.
\]
These are exactly the lattice points of $\hom(\divi,\divz)$.
\end{proof}

\begin{lemma}
Denote by $\{u^0,\ldots, u^r\}$ the generators $\mingens{\divi}$ of $\divi$. Furthermore write
\[
(u^{i-1}+\sigv)\cap(u^i+\sigv)=v^i+\sigv,\ v^i\in\latv_{\QQ},\ i=1,\ldots,r
\]
for the elements of $\incoming{\divi}$. Then
\[
\ext(\divi,\divz)\ =\ \bigcup_{i=1}^r\left(-v^i+\vertex(\divz)+\sigv\right)\backslash\left[(-u^0+\vertex(\divz)+\sigv)\cup(-u^r+\vertex(\divz)+\sigv)\right].
\]
\end{lemma}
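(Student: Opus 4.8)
The plan is to first eliminate $\divz$ from the statement and then reduce the identity to a comparison of inequalities in the two linear functionals $\scalp{\cdot,\rho^0}$ and $\scalp{\cdot,\rho^1}$ on $\latv_{\QQ}$. Using $\polyhedron{\divz}=\vertex(\divz)+\sigv$, each set $-v^i+\vertex(\divz)+\sigv$ equals $\vertex(\divz)-(v^i-\sigv)$, and likewise for the two sets built from $u^0$ and $u^r$; since translation and negation commute with unions and set differences, the asserted right-hand side is $\vertex(\divz)-\bigl(\bigcup_{i=1}^r(v^i-\sigv)\setminus[(u^0-\sigv)\cup(u^r-\sigv)]\bigr)$. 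As $\ext(\divi,\divz)=\vertex(\divz)-\below(\divi)$ by definition, it suffices to prove the $\divz$-free identity $\below(\divi)=\bigcup_{i=1}^r(v^i-\sigv)\setminus[(u^0-\sigv)\cup(u^r-\sigv)]$ (the edge case $r=0$, where $\module{\divi}$ is free and both sides are empty, is easily checked, so I assume $r\ge1$).

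I would then put $\phi^j:=\scalp{\cdot,\rho^j}$ for $j=0,1$, so that $\sigv=\{\phi^0\ge0,\ \phi^1\ge0\}$ and every set occurring becomes a translated open or closed quadrant. Four elementary facts carry out the translation: (i) $w\in u^j+\interior(\sigv)$ iff $\phi^0(w)>\phi^0(u^j)$ and $\phi^1(w)>\phi^1(u^j)$; (ii) by \autoref{gens:ineq} the numbers $\phi^0(u^j)$ increase and the $\phi^1(u^j)$ decrease in $j$, and intersecting the half-space descriptions of $u^{i-1}+\sigv$ and $u^i+\sigv$ gives $\phi^0(v^i)=\phi^0(u^i)$, $\phi^1(v^i)=\phi^1(u^{i-1})$, so $w\in v^i-\sigv$ iff $\phi^0(w)\le\phi^0(u^i)$ and $\phi^1(w)\le\phi^1(u^{i-1})$; (iii) since $a_0,a_1\in\ZZ$, the extreme generators $u^0$ and $u^r$ lie on the facets $\{\phi^0=-a_0\}$ and $\{\phi^1=-a_1\}$ of $\polyhedron{\divi}$, so $\interior(\polyhedron{\divi})=\{\phi^0>\phi^0(u^0),\ \phi^1>\phi^1(u^r)\}$; (iv) consequently $(u^0-\sigv)\cup(u^r-\sigv)$ is disjoint from $\interior(\polyhedron{\divi})$.

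The heart of the argument is a combinatorial equivalence that I would prove directly: for any $w$ with $\phi^0(w)>\phi^0(u^0)$ and $\phi^1(w)>\phi^1(u^r)$ --- i.e.\ $w\in\interior(\polyhedron{\divi})$ --- the condition ``$\phi^0(w)\le\phi^0(u^j)$ or $\phi^1(w)\le\phi^1(u^j)$ for every $j\in\{0,\dots,r\}$'' holds if and only if ``$\phi^0(w)\le\phi^0(u^i)$ and $\phi^1(w)\le\phi^1(u^{i-1})$ for some $i\in\{1,\dots,r\}$''. The ``if'' direction is immediate from the monotonicity in (ii). For ``only if'', the instances $j=0$ and $j=r$ of the hypothesis (together with $w\in\interior(\polyhedron{\divi})$) force $\phi^1(w)\le\phi^1(u^0)$ and $\phi^0(w)\le\phi^0(u^r)$; then, taking $i$ minimal with $\phi^0(w)\le\phi^0(u^i)$ (so that $1\le i\le r$ and $\phi^0(w)>\phi^0(u^{i-1})$), the instance $j=i-1$ forces $\phi^1(w)\le\phi^1(u^{i-1})$. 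Via (i) and (ii) this says exactly $\below(\divi)=\bigcup_{i=1}^r(v^i-\sigv)\cap\interior(\polyhedron{\divi})$.

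It remains to account for the part of $\bigcup_{i=1}^r(v^i-\sigv)$ lying outside $\interior(\polyhedron{\divi})$. If $w\in v^i-\sigv$ and $w\notin\interior(\polyhedron{\divi})$, then $\phi^0(w)\le\phi^0(u^0)$ or $\phi^1(w)\le\phi^1(u^r)$; combining the first possibility with $\phi^1(w)\le\phi^1(u^{i-1})\le\phi^1(u^0)$ yields $w\in u^0-\sigv$, and, symmetrically, the second yields $w\in u^r-\sigv$. Hence $\bigcup_{i=1}^r(v^i-\sigv)\setminus\interior(\polyhedron{\divi})\subseteq(u^0-\sigv)\cup(u^r-\sigv)$. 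Writing $A:=\bigcup_{i=1}^r(v^i-\sigv)$, $B:=(u^0-\sigv)\cup(u^r-\sigv)$ and $C:=\interior(\polyhedron{\divi})$, the disjointness of $B$ and $C$ from (iv) together with $A\setminus C\subseteq B$ gives $A\setminus B=A\cap C$, and $A\cap C=\below(\divi)$ by the previous paragraph; this is the required identity. The one step needing genuine care is (iii), which is what makes $\interior(\polyhedron{\divi})$ have the clean description used everywhere and rests on $a_0,a_1\in\ZZ$; beyond that, the only thing to watch is that strict inequalities (interiors of cones) and non-strict ones (closed cones) stay aligned so the two sides agree exactly rather than merely up to their boundaries, and with the dictionary (i)--(iii) fixed the rest is routine.
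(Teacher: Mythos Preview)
Your proof is correct and follows essentially the same route as the paper: both first strip off the translation by $\vertex(\divz)$ and the global sign to reduce to the $\divz$-free identity $\below(\divi)=\bigcup_{i=1}^r(v^i-\sigv)\setminus[(u^0-\sigv)\cup(u^r-\sigv)]$, and both use that $u^0$ and $u^r$ sit on the two facets of $\polyhedron{\divi}$ (the paper phrases this as $(u^0-\sigv)\cap(u^r-\sigv)=\vertex(\divi)-\sigv$, which is exactly your item (iii)). The only difference is that the paper asserts the core identity in one line ``using the equations giving the $v^i$'', whereas you spell out the pointwise argument via the functionals $\phi^0,\phi^1$; your extra care with strict versus non-strict inequalities is a welcome addition, not a departure.
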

\begin{proof}
Using the assumption we can write $\below(\divi)$ as
\[
\below(\divi)\ =\ \interior{\polyhedron{\divi}}\backslash\bigcup_{i=0}^r\left(u^i+\interior{\sigv}\right).
\]
Furthermore we note that
\[
(u^0-\sigv)\cap (u^r-\sigv)\ =\ \vertex(\divi)-\sigv.
\]
Now use the equations giving the $v^i$ to obtain
\[
\below(\divi)\ =\ \bigcup_{i=1}^r\left(v^i-\sigv\right)\backslash \left[(u^0-\sigv)\cup (u^r-\sigv) \right].
\]
Multiplying this with $(-1)$ and adding $\vertex(\divz)$ on both sides yields the desired formula.
\end{proof}

\begin{proof}[Proof of \autoref{ext1}]
Take the sequence of \autoref{ses} and apply $\Hom(\bullet, \divz)$ to it.
Considering the first part of the long exact sequence of cohomology we have
\begin{align*}
0 & \to\Hom(\divi,\divz)\to\Hom(R^{\#\mingens{\divi}},\divz)
\\
&\to\Hom(\bigoplus_{G\in\incoming{\divi}}G,\divz)\to\Ext^1(\divi,\divz)\to 0.
\end{align*}
Hence, we need to understand the quotient of
$\Hom(\oplus_{G\in\incoming{\divi}}G,\divz)$ by the image of
$\Hom(R^{\#\mingens{\divi}},\divz)$. Now we use \autoref{hom}:
\[
\Hom\left(\bigoplus_{G\in\incoming{\divi}}G,\divz\right)=\bigoplus_{G\in\incoming{\divi}}\Hom(G,\divz)
=\bigoplus_{G\in\incoming{\divi}}\short{\hom(G,\divz)}.
\]
Next we determine the image of $\Hom(R^{\#\mingens{\divi}},\divz)$ in this
direct sum. Take $u^i$ to be a generator of $\divi$, not at the boundary, i.e.
$0\not=i\not=r$. Then $D^i$ and $D^{i+1}$ are the only summands of
$\oplus_{G\in\incoming{\divi}}G$ mapping non-trivially to $R[-u^i]$ via the
following map
\[
\begin{array}{ccc}
D^i\oplus D^{i+1} & \to & R[-u^{i-1}]\oplus R[-u^i]\oplus R[-u^{i+1}],\\
(x^u,x^w) & \mapsto & (-x^{u-u^{i-1}},x^{u-u^{i}}-x^{w-u^{i}},x^{w-u^{i+1}})
\end{array}.
\]
Applying $\Hom(\bullet,\divz)$ to this map gives
\[
\begin{array}{ccc}
\Hom(R[-u^i],\divz) & \to & \Hom(D^i,\divz)\oplus\Hom(D^{i+1},\divz)\\
x^{u-u^i} & \mapsto & (x^u, -x^u)
\end{array}.
\]
Hence, dividing by the image of $\Hom(R[-u^i],\divz)$ means that the elements
$x^u\in\Hom(D^i,\divz)$ and $x^u\in\Hom(D^{i+1},\divz)$ get identified. This
explains why we take the union of the $-v^i+\sigma^\vee$.

Now consider $u^0$. Then the map $D^1\to R[-u^0]\oplus R[-u^1]$ yields
\[
\begin{array}{ccc}
\Hom(R[-u^0],\divz) & \to & \Hom(D^1,\divz)\\
x^{u-u^0} & \mapsto & x^u
\end{array}.
\]
Thus, all $x^u\in\Hom(D^1,\divz)$ such that $u + u^0 \in \polyhedron{\divz}$
are set to zero. One proceeds analogously for $u^r$. This explains the part
which is cut off and we are done.
\end{proof}

Let us briefly remark on the MCM'ness and sMCM'ness as mentioned in the introduction.

\begin{remark}\label{mcm}
We claim that $\Ext^i(\divi,\canonical)=0$ for $i>0$ and any $\divi$. By the
recursion formula it is enough to show that all $\Ext^1(\divi,\canonical)=0$.
Recall that the vertex of $\canonical$ is
$\vertex(\canonical)=[1,\frac{q+1}{n}]$.  Now proceed to compute
$\ext(\divi,\canonical) = \below(\divi)-\vertex(\canonical)$ in two steps:
First subtract $[1,\frac{q}{n}]$, second subtract $[0,\frac{1}{n}]$. One can then show
\[
(\below(\divi)-[1,\frac{q}{n}])\cap\latv\subseteq \below(\divi)\cap\latv.
\]
Now we argue that $\mingens{\divi}=\overline{\below(\divi)}\cap\latv$,
otherwise $\mingens{\divi}$ would not generate $\module{\divi}$.  Hence, the
closure of $\below(\divi)-[1,\frac{q}{n}]$ can only have lattice points on its
top and rightmost edges.  In particular, the lattice points of
$\below(\divi)-[1,\frac{q}{n}]$ do not lie in the `valleys' of
$\below(\divi)-[1,\frac{q}{n}]$, since they were at the `valleys' of
$\below(\divi)$.  Subtracting $[0,\frac{1}{n}]$ from
$\below(\divi)-[1,\frac{q}{n}]$ yields the desired result.
\end{remark}

\begin{remark}\label{ext1:gens}
Assume $\divi$ to be non-trivial. For the simplest case of $\module{0}=R$ in the second
argument we see
\[
\dim_{\field}\Ext^1_R(\divi,R)\ =\ \#\mingens{\divi} - 2.
\]
Thus, $\Ext^1_R(\divi,R)$ vanishes whenever $\module{\divi}$ is generated by
exactly two elements, thereby relating our construction to the result of Wunram
(\cite{wunram:cyclic}).
\end{remark}

\section{\texorpdfstring{$\Tor_1$}{Tor1}}\label{tor1:section}
Using the same strategy as for $\Ext$, we can derive a combinatorial
description of $\Tor_1$ as well. While $\Ext^1$ depended on the polyhedra of
global sections of the involved divisors exclusively, $\Tor_1$ needs the quiver
$\resg$ as an additional datum.

\begin{definition}
\[
\abelow(\divi)\ :=\ \polyhedron{\divi}\backslash\bigcup_{u\in\mingens{\divi}} u+\sigv.
\]
\end{definition}

\begin{prop}\label{tor1}
For two Weil divisors $\divi$ and $\divz$ let
\[
\tor(\divi,\divz)\ :=\ \abelow(\divi)+\vertex(\divz).
\]
Then
\[
\Tor_1(\divi,\divz)\ =\ \bigoplus_{\rundiv^i\in\incoming{\divi}}
\left(\bigoplus_{\rundiv^{ij}\in\incoming{\rundiv^i}}
\short{\tor(\rundiv^{ij},\divz)}
\right).
\]
\end{prop}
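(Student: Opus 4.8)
The plan is to run the argument of \autoref{ext1} with $\Hom(\bullet,\divz)$ replaced by $\bullet\otimes_R\module{\divz}$, and then to apply \autoref{ses} once more, since --- unlike $\Hom$ --- the tensor product does not send modules of the form $\short{P}$ to modules of the form $\short{P}$. Concretely, apply $\bullet\otimes_R\module{\divz}$ to the short exact sequence of \autoref{ses} for $\divi$. As the middle term $R^{\#\mingens{\divi}}$ is free, $\Tor_1(R^{\#\mingens{\divi}},\divz)=0$ and the long exact sequence of $\Tor$ collapses to
\[
\Tor_1(\divi,\divz)\ =\ \ker\Bigl(\bigoplus_{\rundiv^i\in\incoming{\divi}}\bigl(\module{\rundiv^i}\otimes_R\module{\divz}\bigr)\xrightarrow{\ \iota\otimes\id\ }R^{\#\mingens{\divi}}\otimes_R\module{\divz}\Bigr).
\]
Now $\iota$ is injective by \autoref{ses}, hence stays injective after $\otimes_R\operatorname{Frac}(R)$, so $\ker(\iota\otimes\id)$ is a torsion module; and the target is a finite direct sum of copies of the torsion-free module $\module{\divz}$, so $\iota\otimes\id$ kills all torsion in its source. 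Therefore $\ker(\iota\otimes\id)$ is exactly the torsion submodule of $\bigoplus_{\rundiv^i}\module{\rundiv^i}\otimes_R\module{\divz}$, i.e. $\bigoplus_{\rundiv^i}T_i$ where $T_i$ is the torsion submodule of $\module{\rundiv^i}\otimes_R\module{\divz}$ (equivalently $T_i=\Tor_1\bigl((x^{u^{i-1}},x^{u^i}),\divz\bigr)$, the $i$-th component of $\iota$ identifying $\module{\rundiv^i}$ with the syzygy module of that two-generated monomial ideal).

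It therefore remains to prove, for an arbitrary torus invariant Weil divisor $\rundiv$, that the torsion submodule of $\module{\rundiv}\otimes_R\module{\divz}$ equals $\bigoplus_{\rundiv'\in\incoming{\rundiv}}\short{\tor(\rundiv',\divz)}$; applying this with $\rundiv=\rundiv^i$ and summing over $\rundiv^i\in\incoming{\divi}$ then produces the double sum in the statement, the shifts being taken care of by the definition of $\incoming{\bullet}$ (as in \autoref{recursion}). I would prove this identity degree by degree in the $\latv$-grading. A direct computation with the module structure shows that $(\module{\rundiv}\otimes_R\module{\divz})_m$ is the $\field$-span of the symbols $\chi^w\otimes\chi^{m-w}$, for $w$ ranging over the lattice points of the bounded order interval $Q_m:=\polyhedron{\rundiv}\cap(m-\polyhedron{\divz})$, modulo the identifications $\chi^w\otimes\chi^{m-w}=\chi^{w'}\otimes\chi^{m-w'}$ whenever $w-w'\in\sigv$ and both $w,w'\in Q_m$ (no monomial being set to zero, since $\polyhedron{\divz}$ is a full cone). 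The surjection onto the torsion-free quotient $\module{\rundiv}\cdot\module{\divz}\subseteq\field[\latv]$ hits a space of dimension $\le 1$ in each degree, so the torsion in degree $m$ has dimension $\dim_\field(\module{\rundiv}\otimes_R\module{\divz})_m$ minus $0$ or $1$. This difference is visible exactly because the true top $m-\vertex(\divz)$ of $Q_m$ is in general not a lattice point: each maximal lattice point of $Q_m$ that is separated from the ``top class'' contributes one torsion dimension, and via the description of $\incoming{\rundiv}$ through the corners $v^k+\sigv=(u^{k-1}+\sigv)\cap(u^k+\sigv)$ of $\polyhedron{\rundiv}$ together with the definition of $\abelow$ these are in bijection with the $\rundiv'\in\incoming{\rundiv}$ having $m\in\abelow(\rundiv')+\vertex(\divz)$. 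Verifying that distinct $\rundiv'$ contribute linearly independent torsion classes upgrades this dimension count to the asserted direct sum, and summing over $m$ concludes.

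The main obstacle is this last identity. Since $\otimes_R$ destroys the $\short{P}$-form, there is no clean analogue of \autoref{hom}; one must track by hand which symbols $\chi^w\otimes\chi^d$ are identified, separate the torsion from $\module{\rundiv}\cdot\module{\divz}$, localise it in the translated ``dents'' $\abelow(\rundiv')+\vertex(\divz)$, and --- the delicate step --- show that different dents contribute independently, so that the result is a genuine direct sum and not merely a filtration. As in \autoref{ext1}, the two outermost generators $u^0,u^r$ of $\rundiv$ give rise to no corner and need the same boundary bookkeeping as the cut-off terms $-u^0+\sigv$, $-u^r+\sigv$ appearing there.
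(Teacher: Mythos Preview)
Your reduction is correct and the torsion-submodule identification of $\Tor_1(\divi,\divz)$ with $\bigoplus_{\rundiv^i}T_i$ is a clean observation; the degree-wise description of $(\module{\rundiv}\otimes\module{\divz})_m$ via the order interval $Q_m$ is also accurate. However, the paper takes a different and more direct route: it applies \autoref{ses} \emph{twice} to build the first three terms of a free resolution
\[
\bigoplus_{\rundiv^i}\bigoplus_{\rundiv^{ij}}F_0(\rundiv^{ij})\xrightarrow{d_2}\bigoplus_{\rundiv^i}F_0(\rundiv^i)\xrightarrow{d_1}F_0(\divi)\surj\module{\divi},
\]
tensors with $\module{\divz}$, and computes $\Tor_1=\ker(d_1\otimes\id)/\img(d_2\otimes\id)$ entirely inside direct sums of shifted copies of $\module{\divz}$. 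Because the $i$-th block of $d_1\otimes\id$ sends every generator of $F_0(\rundiv^i)\otimes\module{\divz}$ to the same vector $e^i-e^{i-1}$ in $F_0(\divi)\otimes\module{\divz}$, one reads off $\ker(d_1\otimes\id)=\bigoplus_{i,j}\short{\polyhedron{\rundiv^{ij}}+\vertex(\divz)}$ directly, and $\img(d_2\otimes\id)$ lands summand-by-summand, cutting each $\polyhedron{\rundiv^{ij}}$ down to $\abelow(\rundiv^{ij})$. The point is that the ``delicate step'' you flag --- independence of the different dents and the direct-sum splitting --- never arises: everything happens in torsion-free modules with explicit bases, so there is no equivalence-class bookkeeping to do. If you try to complete your argument by presenting $\module{\rundiv^i}\otimes\module{\divz}$ (the natural move is to apply \autoref{ses} to $\rundiv^i$), you recover exactly the paper's quotient $\ker(d_1\otimes\id)/\img(d_2\otimes\id)$ on that block; so your route is correct but converges to the paper's once made precise, while the paper's organisation avoids the torsion analysis altogether.
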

\begin{proof}
For a Weil divisor $\rundiv$, denote by $F_0(\rundiv)$ the $\latv$-graded module
$R^{\mingens{\rundiv}}$ in the middle of the sequence of \autoref{ses}. Then we can
build a free resolution of $\divi$ as follows:
\[
\bigoplus_{\rundiv^i\in\incoming{\divi}}\left(\bigoplus_{\rundiv^{ij}\in\incoming{\rundiv^i}}F_0(\rundiv^{ij})\right)\stackrel{d_2}{\to}
\bigoplus_{\rundiv^i\in\incoming{\divi}}F_0(\rundiv^i)
\stackrel{d_1}{\to} F_0(\divi)\surj \divi\to 0,
\]
resulting from repeatedly applying \autoref{ses}. Tensorizing this sequence with $\divz$ and then taking cohomology yields 
\[
\Tor_1(\divi,\divz)\ =\ \frac{\ker(d_1\otimes\id_{\divz})}{\img(d_2\otimes\id_{\divz})}.
\]
Tensorizing $R[-u]$ with $\divz$ yields $\divz[-u]$. Inserting this, we see that the kernel of $d_1\otimes\id_{\divz}$ is exactly
\[
\ker(d_1\otimes\id_{\divz})\ =\ 
\bigoplus_{\rundiv^i\in\incoming{\divi}}\left(\bigoplus_{\rundiv^{ij}\in\incoming{\rundiv^i}}
\left(
\short{\polyhedron{\rundiv^{ij}}+\vertex(\divz)}
\right)
\right).
\]
By construction it is enough to consider the single summands. In each of these we have to remove the image of $F_0(\rundiv^{ij})\otimes\divz$. Assume $\rundiv^{ij}$ to be generated by $\mingens{\rundiv^{ij}}=\{u^0,\ldots,u^r\}$. Then the image of $F_0(\rundiv^{ij})\otimes\divz$ is exactly
\[
\sum_{k=0}^r x^{u^k}\divz.
\]
Thus for the support we have
\[
(\polyhedron{\rundiv^{ij}}+\vertex(\divz))\backslash\bigcup_{k=0}^r(u^k+\divz)
\ =\
\vertex(\divz)+\left(
\polyhedron{\rundiv^{ij}}\backslash\bigcup_{k=0}^r(u^k+\sigv)
\right)
\]
and this equals $\tor(\rundiv^{ij},\divz)$, hence, finishing the proof.
\end{proof}

\section{The Matlis dual}\label{matlis:section}

\begin{definition}
We define the Matlis dual $\matlis{H}$ of an $R$-module $H$ to be
\[
\matlis{H}\ :=\ \Hom_R(H,\field[-\sigv\cap\latv]).
\]
One can check that in our setting, $\field[-\sigv\cap\latv]$ is exactly the
injective hull of $\field$ as an $R$-module.
\end{definition}
\begin{remark}\label{matlis:short}
As stated in \cite{sturmfelsmiller}, this means that for a $\latv$-graded
module $H$,
\[
(\matlis{H})_{-u}\ =\ \Hom_{\field}(H_u,\field).
\]
The $R$-multiplication on $\matlis{H}$ is then the transpose of the
$R$-multiplication on $H$.  In particular, this means that
$\matlis{\short{P}}=\short{-P}$.
\end{remark}

\begin{prop}\label{matlis:ext1}
The Matlis dual of $\Ext^1(\divi,\canonical-\divz)$ is
\[
\matlis{\Ext^1(\divi,\canonical-\divz)}\ =\ \short{\tor(\divi,\divz)}.
\]
\end{prop}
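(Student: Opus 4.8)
The plan is to compute both sides explicitly as modules of the form $\short{P}$ for suitable subsets $P\subseteq\latv_{\QQ}$, and then check that the two subsets coincide. For the right-hand side this is already done: by definition $\tor(\divi,\divz) = \abelow(\divi)+\vertex(\divz)$, so $\short{\tor(\divi,\divz)}$ has support $\abelow(\divi)+\vertex(\divz)$. For the left-hand side I would start from \autoref{ext1}, which gives $\Ext^1(\divi,\canonical-\divz) = \short{\ext(\divi,\canonical-\divz)}$ with $\ext(\divi,\canonical-\divz) = -(\below(\divi)-\vertex(\canonical-\divz))$. By \autoref{matlis:short} we have $\matlis{\short{P}} = \short{-P}$, so $\matlis{\Ext^1(\divi,\canonical-\divz)} = \short{\below(\divi)-\vertex(\canonical-\divz)}$. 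Thus the whole statement reduces to the purely combinatorial identity
\[
\below(\divi)-\vertex(\canonical-\divz)\ =\ \abelow(\divi)+\vertex(\divz).
\]

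The first key step is therefore to pin down $\vertex(\canonical-\divz)$ in terms of $\vertex(\divz)$. Since $\polyhedron{\divi}$ is always $\vertex(\divi)+\sigv$ and the vertex is determined by the two supporting inequalities $\scalp{u,\rho^i}\ge -a_i$, the vertex map is additive on Cartier parts and, more to the point, one computes directly that $\vertex(\canonical-\divz) = \vertex(\canonical) - \vertex(\divz)$ — indeed if $\divz = b_0[\rho^0]+b_1[\rho^1]$ then $\canonical-\divz = (-1-b_0)[\rho^0]+(-1-b_1)[\rho^1]$, and solving the two equalities $\scalp{\vertex,\rho^i} = -a_i$ shows the vertex is affine-linear in $(a_0,a_1)$, hence $\vertex(\canonical-\divz) = \vertex(\canonical)-\vertex(\divz)$. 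Plugging this in, the identity to prove becomes
\[
\below(\divi)-\vertex(\canonical)+\vertex(\divz)\ =\ \abelow(\divi)+\vertex(\divz),
\]
i.e. after cancelling $\vertex(\divz)$, the $\divz$-free statement
\[
\below(\divi)-\vertex(\canonical)\ =\ \abelow(\divi).
\]

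The second, and main, step is this last equality, and it is essentially the content of \autoref{mcm} read in reverse. Recall $\below(\divi) = \interior(\polyhedron{\divi})\setminus\bigcup_{u\in\mingens{\divi}}(u+\interior\sigv)$ while $\abelow(\divi) = \polyhedron{\divi}\setminus\bigcup_{u\in\mingens{\divi}}(u+\sigv)$: the two sets differ exactly by swapping interiors for closures, i.e. by a ``half-open shift'' inward on $\below$. Translating by $-\vertex(\canonical) = -[1,\frac{q+1}{n}]$ is precisely the shift that turns the open polyhedron $\interior(\polyhedron{\divi})$ onto the closed region $\polyhedron{\divi}$ at the level of the relevant lattice-free combinatorial region: subtracting $[1,\frac{q+1}{n}]$ moves the leftmost and bottom (open) boundary walls of $\below(\divi)$ onto the leftmost and bottom (closed) walls of $\abelow(\divi)$, and simultaneously moves each excised open cone $u+\interior\sigv$ to $(u-\vertex(\canonical))+\interior\sigv$, whose complement within the shifted polyhedron is exactly the closed excision defining $\abelow$. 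The careful bookkeeping here — that no lattice point is gained or lost in the ``valleys,'' which is what \autoref{mcm} verifies via $(\below(\divi)-[1,\frac{q}{n}])\cap\latv\subseteq\below(\divi)\cap\latv$ and the subsequent subtraction of $[0,\frac{1}{n}]$ — is the part I expect to be the genuine obstacle. Concretely, one has to show the translate by $-\vertex(\canonical)$ carries the half-open set $\below(\divi)$ bijectively, and compatibly with the $\sigv$-module structure, onto the half-open set $\abelow(\divi)$; I would do this edge-by-edge, using \autoref{gens:ineq} to control the slopes of the staircase of generators, exactly as in \autoref{mcm}. Once that region identity is in hand, the proposition follows by the chain of equalities above together with $\matlis{\short{P}}=\short{-P}$.
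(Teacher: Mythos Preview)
Your initial reduction is correct: using \autoref{ext1}, \autoref{matlis:short}, and the additivity $\vertex(\canonical-\divz)=\vertex(\canonical)-\vertex(\divz)$, the claim amounts to
\[
\bigl[\below(\divi)+\vertex(\divz)-\vertex(\canonical)\bigr]\cap\latv \;=\; \bigl[\abelow(\divi)+\vertex(\divz)\bigr]\cap\latv.
\]
The gap is the next step, where you ``cancel $\vertex(\divz)$'' and reduce to the $\divz$-free identity $\below(\divi)-\vertex(\canonical)=\abelow(\divi)$. That identity is simply false as an equality of subsets of $\latv_{\QQ}$: the closures $\overline{\below(\divi)}$ and $\overline{\abelow(\divi)}$ coincide and are bounded (for nontrivial $\divi$), so one cannot be a nonzero translate of the other. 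And if you retreat to the lattice-point version $[\below(\divi)-\vertex(\canonical)]\cap\latv=\abelow(\divi)\cap\latv$ (which \emph{is} true --- it is the case $\divz=0$), you cannot translate both sides back by the generally non-integral vector $\vertex(\divz)$ and still conclude equality of lattice points. So the cancellation is not legitimate in either reading, and the edge-by-edge ``half-open shift'' argument you sketch is aimed at the wrong target.

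The paper's proof deals with exactly this difficulty by introducing the link $\universal(\divi)=\below(\divi)\cap[\overline{\below(\divi)}+\vertex(\canonical)]$ and proving, in \autoref{torext}, two identities that hold for \emph{every} $\divz$:
\[
[\below(\divi)+\vertex(\divz)]\cap\latv=[\universal(\divi)+\vertex(\divz)]\cap\latv,\qquad
[\abelow(\divi)+\vertex(\divz)]\cap\latv=[\universal(\divi)+\vertex(\divz)-\vertex(\canonical)]\cap\latv.
\]
Applying the first with $\divz$ replaced by $\divz-\canonical$ and chaining through $\universal(\divi)$ yields the desired equality without ever trying to strip off $\vertex(\divz)$. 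The point is that the passage between $\below$ and $\abelow$ is mediated by a fixed set $\universal(\divi)$ \emph{before} intersecting with $\latv$, so the rational shift by $\vertex(\divz)$ can ride along harmlessly.
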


In order to prove this proposition, we will first have a closer look at the
sets $\below(\divi)$ and $\abelow(\divi)$. By construction they only differ at
the boundaries.  We will introduce a set $\universal(\divi)$, which is the
common core of $\below(\divi)$ and $\abelow(\divi)$, meaning that it links
these sets when intersecting with the lattice $\latv$. This set and its
behaviour under shifts by vertices $\vertex(\divz)$ is the key to understanding
\autoref{main:exttor}, the Matlis duality of $\Ext$ and $\Tor$.

\begin{definition}
For a Weil divisor $\divi$, define the \define{link}
of $\divi$:
\[
\universal(\divi)\ :=\ \below(\divi)\cap\left[\overline{\below(\divi)}+\vertex(\canonical)]\right].
\]
\end{definition}

\begin{example}
Let us construct $\universal(E^3)$. Again we have $\below(E^3)$ with the dashed lower and left edge. Now we shift it by 
\[
\vertex(\canonical)\ =\ [1,\frac{4}{7}]
\]
and take the closure. The intersection $\universal(E^3)$ is indicated in \textcolor{green}{green}.
\[
\begin{tikzpicture}[scale=.7]
\draw[step=1.0,black!20,thin] (1.5,0.5) grid (8.5,4.5);
\fill[pattern color=red!20, pattern=north west lines] (3,9/7) -- (3,2) -- (4,2+3/7) -- (4,2) -- (7,2+9/7) -- (7,3) -- cycle;
\fill[pattern color=red!20, pattern=north west lines] (3+1,9/7+4/7) -- (3+1,2+4/7) -- (4+1,2+3/7+4/7) -- (4+1,2+4/7) -- (7+1,2+9/7+4/7) -- (7+1,3+4/7) -- cycle;
\draw[color=red, thick] (3,2) -- (4,2+3/7) -- (4,2) -- (7,2+9/7) -- (7,3);
\draw[color=red, thick, dashed] (7,3) -- (3,9/7) -- (3,2);
\draw[color=red, thick] (3+1,9/7+4/7) -- (3+1,2+4/7) -- (4+1,2+3/7+4/7) -- (4+1,2+4/7) -- (7+1,2+9/7+4/7) -- (7+1,3+4/7) -- cycle;
\draw[color=green, thick] (3+1,9/7+4/7) -- (3+1+3,9/7+4/7+9/7) -- (7,2+9/7) -- (4,2) -- (4,2+3/7) -- cycle;
\fill[pattern color=green!20, pattern=north east lines] (3+1,9/7+4/7) -- (3+1+3,9/7+4/7+9/7) -- (7,2+9/7) -- (4,2) -- (4,2+3/7) -- cycle;
\end{tikzpicture}
\]
\end{example}
Alternatively one can define $\universal(\divi)$ using the $\abelow$ set. One just has to close the first set in the intersection. This is the first statement in the following proposition.

\begin{prop}\label{torext}
Given $\divi$ and $\divz$ two Weil divisors on $\cqs$, we have
\begin{enumerate}
\item
$\universal(\divi)\ =\ \overline{\abelow(\divi)}\cap[\abelow(\divi)+\vertex(\canonical)]$;
\item
$[\below(\divi)+\vertex(\divz)]\cap\latv\ =\ [\universal(\divi)+\vertex(\divz)]\cap\latv$; and
\item
$[\abelow(\divi)+\vertex(\divz)]\cap\latv\ =\ [\universal(\divi)+\vertex(\divz)-\vertex(\canonical)]\cap\latv$.
\end{enumerate}
\end{prop}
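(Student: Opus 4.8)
The plan is to prove the three assertions by a direct combinatorial analysis of how the sets $\below(\divi)$, $\abelow(\divi)$, and $\universal(\divi)$ differ, which is only at their boundaries. The key structural fact is that each of these three sets is, up to boundary, the region inside $\interior(\polyhedron{\divi})$ lying below the ``staircase'' cut out by the translates $u^i+\sigv$ for $u^i\in\mingens{\divi}$. Concretely, using the description from the lemma preceding \autoref{ext1} together with the relation $(u^0-\sigv)\cap(u^r-\sigv)=\vertex(\divi)-\sigv$, one has $\below(\divi)=\bigcup_{i=1}^r(v^i-\sigv)\setminus\big[(u^0-\sigv)\cup(u^r-\sigv)\big]$, and analogously $\abelow(\divi)$ is obtained by using closed translates throughout. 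So the difference between $\below$ and $\abelow$ consists exactly of: (a) the two ``outer'' boundary rays emanating in the $\rho^0$- and $\rho^1$-directions from the extreme generators $u^0$ and $u^r$, which belong to $\abelow$ but not $\below$; and (b) the ``valley'' boundary segments, i.e. the closed edges of the translates $u^i+\sigv$ that cut into the region, which belong to $\below$ but not $\abelow$. I would make this dichotomy precise first, since all three parts rest on it.

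For part (1), I would argue that closing $\abelow(\divi)$ recovers $\overline{\below(\divi)}$ (both closures agree, as the two sets differ only in lower-dimensional boundary pieces), so $\overline{\abelow(\divi)}\cap[\abelow(\divi)+\vertex(\canonical)]$ and $\below(\divi)\cap[\overline{\below(\divi)}+\vertex(\canonical)]$ could a priori differ only along boundary strata. The point is that translating by $\vertex(\canonical)=[1,\tfrac{q+1}{n}]$, which has strictly positive entries and — crucially, as exploited in \autoref{mcm} — moves every lattice point strictly into the interior past the staircase, pushes the ``bad'' boundary of one factor off the ``bad'' boundary of the other: the valley segments of $\below(\divi)$ that are excluded from $\abelow(\divi)$ do not survive intersection with $\abelow(\divi)+\vertex(\canonical)$ because that translate no longer reaches them, and symmetrically the outer rays of $\abelow(\divi)+\vertex(\canonical)$ excluded from $\below(\divi)+\vertex(\canonical)$ fall outside $\overline{\abelow(\divi)}$. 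Hence the two descriptions of $\universal(\divi)$ coincide as sets, not merely after intersecting with $\latv$.

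For parts (2) and (3), the common principle is: although $\below$, $\abelow$, $\universal$ differ on boundaries, \emph{their lattice points coincide after the appropriate shift}. For (2) I would show $[\below(\divi)+\vertex(\divz)]\cap\latv=[\universal(\divi)+\vertex(\divz)]\cap\latv$ by checking that no lattice point of $\below(\divi)+\vertex(\divz)$ lies in the part of $\below(\divi)$ removed when intersecting with $\overline{\below(\divi)}+\vertex(\canonical)$ — this is essentially the content of \autoref{mcm}, which says precisely that the lattice points of $\below(\divi)-\vertex(\canonical)$ already lie in $\below(\divi)$, i.e. the shift by $\vertex(\canonical)$ does not lose lattice points; translating everything further by $\vertex(\divz)$ is harmless since lattice translations preserve lattice points and the set inclusions. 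Part (3) is the mirror image: I would show $[\abelow(\divi)+\vertex(\divz)]\cap\latv=[\universal(\divi)+\vertex(\divz)-\vertex(\canonical)]\cap\latv$ using description (1), i.e. $\universal(\divi)-\vertex(\canonical)=[\overline{\abelow(\divi)}-\vertex(\canonical)]\cap\abelow(\divi)$, and noting that a lattice point of $\abelow(\divi)$ automatically lies in $\overline{\abelow(\divi)}-\vertex(\canonical)$ after the shift — again because $\vertex(\canonical)$ has the positivity needed to absorb the outer-ray discrepancy, so no lattice point of $\abelow(\divi)$ is lost. I expect the main obstacle to be part (1): establishing equality of the two \emph{sets} (not just their lattice points) requires a careful case-by-case check that the boundary discrepancies of the two factors are ``transverse'' under the $\vertex(\canonical)$-shift, and one must be attentive to the half-open nature of $\below$ (leftmost and bottom edges excluded, as stressed in the text) versus the closed $\abelow$. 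Parts (2) and (3) are then comparatively routine, amounting to repackaging \autoref{mcm} and its reflection.
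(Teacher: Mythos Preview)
Your treatment of part (1) is in the same spirit as the paper's: both arguments rest on the observation that $\overline{\below(\divi)}=\overline{\abelow(\divi)}$ and that the two sets differ only on complementary portions of the boundary. The paper dispatches this in one sentence (``complementary boundary containments then yield the first formula''), whereas you propose a more detailed transversality check; your version would work but is more labor than needed.

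The genuine gap is in your reduction of parts (2) and (3) to \autoref{mcm}. You write that after invoking \autoref{mcm}, ``translating everything further by $\vertex(\divz)$ is harmless since lattice translations preserve lattice points and the set inclusions.'' But $\vertex(\divz)$ is \emph{not} a lattice vector in general --- for instance $\vertex(E^3)=[3,\tfrac{9}{7}]$ in the running example --- so this step fails. What \autoref{mcm} actually gives you is that $[\below(\divi)-\vertex(\canonical)]\cap\latv=\emptyset$, a statement about one specific rational shift; it does not transfer to $[\below(\divi)+\vertex(\divz)]\cap\latv$ for an arbitrary divisor $\divz$, because the coset $\latv-\vertex(\divz)$ bears no a priori relation to $\latv+\vertex(\canonical)$.

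The paper's argument for (2) is instead a direct halfspace computation that exploits integrality of the \emph{pair} $(\divi,\divz)$, not of $\vertex(\divz)$ alone. Writing $\divi=-a_\divi[\rho^0]-b_\divi[\rho^1]$ and $\divz=-a_\divz[\rho^0]-b_\divz[\rho^1]$ with $a_\divi,a_\divz,b_\divi,b_\divz\in\ZZ$, any lattice point $u$ in $\below(\divi)+\vertex(\divz)\subseteq\interior(\polyhedron{\divi}+\vertex(\divz))$ satisfies $\scalp{u,\rho^0}>a_\divi+a_\divz$; since both sides are integers this forces $\scalp{u,\rho^0}\ge a_\divi+a_\divz+1$, and similarly for $\rho^1$. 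That ``$+1$'' is exactly the effect of adding $\vertex(\canonical)$, whence $u\in\polyhedron{\divi}+\vertex(\divz)+\vertex(\canonical)$, and intersecting back with $\below(\divi)+\vertex(\divz)$ gives $\universal(\divi)+\vertex(\divz)$. The point you are missing is that the integrality leverage comes from the sum $a_\divi+a_\divz$ being an integer, not from $\vertex(\divz)$ being a lattice point. Part (3) is then genuinely parallel.
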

\begin{proof}
It is already clear that the closures of $\below$ and $\abelow$ are equal. The
complementary boundary containments then yield the first formula.

The proofs of the second and third claim are very similar, hence, we will only
prove the second claim.

The containment of $\universal(\divi)+\vertex(\divz)$ in $\below(\divi)+\vertex(\divz)$ is trivial.

By construction, $\below(\divi)$ is contained in $\polyhedron{\divi}$, even in the interior. Let
\[
\polyhedron{\divi}\ =\ \{u\in\latv_{\QQ}\ |\ \scalp{u,\rho^0}\ge a_{\divi},\ \scalp{u,\rho^0}\ge b_{\divi}\},
\]
with $a_{\divi},b_{\divi}\in\ZZ$, i.e. $\divi = -a_{\divi}\cdot[\rho^0]-b_{\divi}\cdot[\rho^1]$. Similarly, let
$\divz = -a_{\divz}\cdot[\rho^0]-b_{\divz}\cdot[\rho^1]$ for some $a_{\divz},b_{\divz}\in\ZZ$. Hence
\[
\below(\divi)+\vertex(\divz)\subseteq \polyhedron{\divi}+\vertex(\divz)=
\left\{u\in\latv_{\QQ}\ |\ 
\begin{array}{ccc}\scalp{u,\rho^0} &\ge& a_{\divi}+a_{\divz},\\ \scalp{u,\rho^1}&\ge& b_{\divi}+b_{\divz}
\end{array}
\right\}.
\]
Now we want lattice points in the interior of the right hand side. Taking
$u\in\interior(\polyhedron{\divi}+\vertex(\divz))$, $u$ evaluates to an
integer with both $\rho^0$ and $\rho^1$. Since it is not on the boundary of
$\polyhedron{\divi}+\vertex(\divz)$, we have
\[
\scalp{u,\rho^0}\ge a_{\divi}+a_{\divz}+1,\mbox{ and }\scalp{u,\rho^1}\ge b_{\divi}+b_{\divz}+1.
\]
This corresponds exactly to adding the vertex of the canonical divisor $\canonical=-[\rho^0]-[\rho^1]$. We obtain
\[
[\below(\divi)+\vertex(\divz)]\cap\latv\subseteq \polyhedron{\divi}+\vertex(\divz)+\vertex(\canonical).
\]
Intersecting the right hand side with $\below(\divi)+\vertex(\divz)$ preserves
this relation. As a final step, we note that we can replace
$\polyhedron{\divi}$ by $\overline{\below(\divi)}$.  One then recognizes this
intersection as $\universal(\divi)+\vertex(\divz)$.
\end{proof}

\begin{proof}[Proof of \autoref{matlis:ext1}]
Inserting the formula for $\tor$ and $\ext$ in terms of $\below(\divi)$ and
$\abelow(\divi)$, we use the formula of \autoref{torext} to obtain:
\begin{align*}
\tor(\divi,\divz)\cap\latv &= [\abelow(\divi)+\vertex(\divz)]\cap\latv\\
&= [\universal(\divi)+\vertex(\divz)-\vertex(\canonical)]\cap\latv\\
&= [\below(\divi)-\vertex(\canonical-\divz)]\cap\latv\\
&= -\ext(\divi, \canonical-\divz)\cap\latv.
\end{align*}
Inserting \autoref{matlis:short} finishes the proof.
\end{proof}

\section{Main theorems}
\begin{theorem}\label{ext1:symmetric}\label{symmetry}
Let $\divi$ and $\divz$ be two Weil divisors on a cyclic quotient singularity. Then
\[
\Ext^1(\divi,\canonical-\divz)\ =\ \Ext^1(\divz,\canonical-\divi).
\]
\end{theorem}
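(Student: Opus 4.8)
The plan is to reduce the statement to a symmetry property of the combinatorial set $\ext(\divi,\divz)$, exploiting the Matlis duality already established in \autoref{matlis:ext1}. Since $\matlis{(\cdot)}$ is an exact contravariant functor and $\matlis{\matlis{H}} = H$ for the finite-length $\latv$-graded modules appearing here, the identity $\Ext^1(\divi,\canonical-\divz) = \Ext^1(\divz,\canonical-\divi)$ is equivalent, after applying $\matlis{(\cdot)}$ to both sides, to $\short{\tor(\divi,\divz)} = \short{\tor(\divz,\divi)}$, i.e.\ to the equality of lattice point sets $\tor(\divi,\divz)\cap\latv = \tor(\divz,\divi)\cap\latv$. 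Unwinding the definition $\tor(\divi,\divz) = \abelow(\divi)+\vertex(\divz)$, and recalling $\vertex(\divi)$ is the apex of $\polyhedron{\divi}=\vertex(\divi)+\sigv$, this becomes a purely combinatorial claim about the sets $\abelow(\divi)+\vertex(\divz)$ and $\abelow(\divz)+\vertex(\divi)$ having the same lattice points.

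The key step is therefore to prove this combinatorial symmetry directly. First I would observe that both $\abelow(\divi)+\vertex(\divz)$ and $\abelow(\divz)+\vertex(\divi)$ sit inside the \emph{same} polyhedron: writing $\divi = -a_\divi[\rho^0]-b_\divi[\rho^1]$ and $\divz = -a_\divz[\rho^0]-b_\divz[\rho^1]$, one has $\polyhedron{\divi}+\vertex(\divz)=\polyhedron{\divz}+\vertex(\divi) = \{u : \scalp{u,\rho^0}\ge a_\divi+a_\divz,\ \scalp{u,\rho^1}\ge b_\divi+b_\divz\}$, which is exactly $\polyhedron{\divi+\divz}$. The set $\abelow(\divi)$ is obtained from $\polyhedron{\divi}$ by removing the translates $u+\sigv$ over the generators $u\in\mingens{\divi}$; the lattice points that survive are precisely those of $\polyhedron{\divi}$ that are \emph{not} in the divisorial ideal $\module{\divi}$ regenerated upward. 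So $\tor(\divi,\divz)\cap\latv$ should be identified with the lattice points of $\polyhedron{\divi+\divz}$ that lie in $x^{\vertex(\divz)}\cdot\module{\divi}$ (the shifted ideal) but that are \emph{not} already in $\module{\divi+\divz}$, or more precisely the complement inside a suitable region. I would make this precise using the description from the proof of \autoref{tor1}: up to the quiver bookkeeping, $\short{\tor(\divi,\divz)}$ records the degrees $u$ with $u\in\polyhedron{\divi}+\vertex(\divz)$ but $u\notin u^k+\polyhedron{\divz}$ for every generator $u^k$ of $\divi$ — a condition that I expect, after rewriting $u^k+\polyhedron{\divz}\subseteq\polyhedron{\divz+E^{k}}$-type sets, to be manifestly symmetric in $\divi$ and $\divz$ because the roles of "ambient polyhedron" and "subtracted translates" can be interchanged.

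An alternative, perhaps cleaner, route avoids $\Tor$ entirely and works with $\ext$ directly: by \autoref{ext1} it suffices to show $\ext(\divi,\canonical-\divz)\cap\latv = \ext(\divz,\canonical-\divi)\cap\latv$, i.e.\ $-(\below(\divi)-\vertex(\canonical-\divz))$ and $-(\below(\divz)-\vertex(\canonical-\divi))$ share the same lattice points. Using $\vertex(\canonical-\divz) = \vertex(\canonical)+\vertex(\divz)$-type additivity for the apex under divisor addition (which holds since $\polyhedron{\divi+\divz}=\polyhedron{\divi}+\polyhedron{\divz}$ for the vertices, $\sigv$ being simplicial), and the relation between $\below(\divi)$ and $\universal(\divi)$ from \autoref{torext}(2), one reduces to showing $[\universal(\divi)+\vertex(\divz)]$ and $[\universal(\divz)+\vertex(\divi)]$, both translated to center inside $\polyhedron{\divi+\divz}$, agree on the lattice. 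Here $\universal(\divi)$ has the advantage of being the symmetric "common core" already flagged in the text. Either way, the main obstacle I anticipate is the bookkeeping at the boundary edges: $\below$, $\abelow$ and $\universal$ differ precisely along the leftmost/bottom (resp.\ top/rightmost) edges, and the equality of sets holds only after intersecting with $\latv$, so I will need to track carefully which half-open edges of the translated region $\polyhedron{\divi+\divz}$ each side contributes, check that the "dents" coming from $\mingens{\divi}$ versus $\mingens{\divz}$ fall outside the relevant lattice strip (as in the argument of \autoref{mcm}), and confirm no lattice point is gained or lost in the symmetrization. Once the boundary combinatorics is pinned down, the symmetry of the interior is immediate from the symmetric description of $\polyhedron{\divi+\divz}$.
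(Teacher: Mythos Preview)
Your proposal sketches two routes but completes neither: the decisive step in each is left as an expectation (``a condition that I expect \ldots to be manifestly symmetric'', ``once the boundary combinatorics is pinned down''). That said, your first route via \autoref{matlis:ext1} can be finished, and the missing observation is short. For a lattice point $u$, the condition $u\in\abelow(\divi)+\vertex(\divz)$ unpacks to $u\in\polyhedron{\divi+\divz}$ together with $u\notin u^k+\polyhedron{\divz}$ for every $u^k\in\mingens{\divi}$; since each $u^k$ is a lattice point, the second clause says exactly $u\notin\supp(x^{u^k}\module{\divz})$ for all $k$, i.e.\ $u\notin\supp(\module{\divi}\cdot\module{\divz})$. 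Hence
\[
\tor(\divi,\divz)\cap\latv\ =\ \supp(\module{\divi+\divz})\setminus\supp(\module{\divi}\cdot\module{\divz}),
\]
which is visibly symmetric in $\divi$ and $\divz$. Together with \autoref{matlis:ext1} and \autoref{matlis:short} this gives the theorem.

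The paper argues differently. It works directly with the $\ext$ sets and shows that the difference $[-\ext(\divz,\canonical-\divi)]\setminus[-\ext(\divi,\canonical-\divz)]$ is contained in a finite union $\bigcup_i-\ext(\divz[-u^i],\canonical)$, and then invokes the MCM vanishing $\Ext^1(\divz,\canonical)=0$ from \autoref{mcm} to conclude that this union contains no lattice points. So the paper's key input is Cohen--Macaulayness of $\module{\divz}$, whereas your (completed) argument replaces it by the commutativity of the product ideal $\module{\divi}\cdot\module{\divz}$; the latter is arguably more transparent and avoids the somewhat delicate boundary analysis of \autoref{mcm}. Your second route, via $\universal$, gestures toward the paper's mechanism when you allude to \autoref{mcm} for the ``dents'', but as written it is too vague to stand as a proof.
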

\begin{proof}
We want to prove the equality of the lattice points of the respective supports, i.e.
\[
\ext(\divi,\canonical-\divz)\cap\latv\ =\ \ext(\divz,\canonical-\divi)\cap\latv.
\]
The trick is to show that there are no lattice points in the respective
complements. Since this is symmetric we will just show
\[
\left[\ext(\divz,\canonical-\divi)\backslash \ext(\divi,\canonical-\divz)\right]\cap\latv = \emptyset.
\]
We reverse the sign and insert the definition of $\below(\divi)$ to get
\begin{align*}
-\ext(\divi,\canonical-\divz) &= \vertex(-\canonical+\divz)+\below(\divi)\\
&=(\vertex(-\canonical+\divi+\divz)+\interior(\sigv))\backslash\bigcup_{i=0}^r(\vertex(\divz-\canonical)+u^i+\interior{\sigv}),
\end{align*}
where $\{u^0,\ldots,u^r\}=\mingens{\divi}$.
Both $-\ext(\divi,\canonical-\divz)$ and $-\ext(\divz,\canonical-\divi)$ result
from $\vertex(-\canonical+\divi+\divz)+\interior(\sigv)$ by cutting off certain
pieces at the top. Inserting the equation for $-\ext(\divi,\canonical-\divz)$, we can
rephrase this as an intersection, i.e.
\begin{align*}
-\ext(\divz,\canonical-\divi)&\backslash -\ext(\divi,\canonical-\divz)\\
&=\ -\ext(\divz,\canonical-\divi)\cap\bigcup_{i=0}^r(\vertex(\divz)-\vertex(\canonical)+u^i+\interior{\sigv}).
\end{align*}
The set $-\ext(\divz,\canonical-\divi)$ is a shift of $\below(\divz)$.  Thus,
we can replace $\vertex(\divz)+u^i+\interior{\sigv}$ by $u^i+\below(\divz)$ in
the above intersection. Hence, 
\begin{align*}
-\ext(\divz,\canonical-\divi)\backslash -\ext(\divi,\canonical-\divz)\ &\subseteq\ \bigcup_{i=0}^r(-\vertex(\canonical) + u^i+\below(\divz))\\
&=\ \bigcup_{i=0}^r-\ext(\divz[-u^i],\canonical).
\end{align*}
Since $\divz$ is $\QQ$-Cartier and hence, MCM, we know that
$\Ext(\divz,\canonical)=0$, which stays true under $\latv$-shifts of $\divz$.
Thus, $-(\ext(\divz[-u^i],\canonical)\cap\latv)=\emptyset$ for all
$i=0,\ldots,r$, and we are done.
\end{proof}

Finally, we show the duality of $\Ext$ and $\Tor$.

\begin{theorem}\label{main:exttor}\label{exttor}
Let $\divi$ and $\divz$ be two Weil divisors on a cyclic quotient singularity. Then
\[
\Ext^{i+2}(\divi,\canonical-\divz)\ =\ \matlis{\Tor_i(\divi,\divz)}
\]
for $i>0$.
\end{theorem}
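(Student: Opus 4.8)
Here is a plan for proving the duality $\Ext^{i+2}(\divi,\canonical-\divz) = \matlis{\Tor_i(\divi,\divz)}$ for $i>0$.

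The strategy is to combine the two recursion formulas from \autoref{recursion} with the base case \autoref{matlis:ext1}, which already handles the ``$i=0$'' instance of the statement in the disguised form $\matlis{\Ext^1(\divi,\canonical-\divz)}=\short{\tor(\divi,\divz)}$; note that $\matlis{}$ is an involution on finitely generated $\latv$-graded modules whose graded pieces are finite-dimensional, and all modules in sight are of this type, so \autoref{matlis:ext1} equally gives $\Ext^1(\divi,\canonical-\divz)=\matlis{\short{\tor(\divi,\divz)}}$. Combining this with \autoref{tor1}, one immediately reads off the case $i=1$: by \autoref{recursion}, $\Ext^3(\divi,\canonical-\divz)=\bigoplus_{G\in\incoming{\divi}}\Ext^2(G,\canonical-\divz)=\bigoplus_{G\in\incoming{\divi}}\bigoplus_{G'\in\incoming{G}}\Ext^1(G',\canonical-\divz)$, which by \autoref{matlis:ext1} equals $\bigoplus_G\bigoplus_{G'}\matlis{\short{\tor(G',\divz)}}$, and this is exactly $\matlis{\Tor_1(\divi,\divz)}$ by the formula in \autoref{tor1}. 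So the case $i=1$ is already essentially a combination of results proved above.

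For general $i>0$ the cleanest route is induction on $i$, or better, a direct unwinding of both recursions down to their common base. Applying \autoref{recursion} twice to the left-hand side gives
\[
\Ext^{i+2}(\divi,\canonical-\divz)\ =\ \bigoplus_{G\in\incoming{\divi}}\bigoplus_{G'\in\incoming{G}}\Ext^{i}(G',\canonical-\divz),
\]
and applying it once to $\Tor$ and then invoking the exactness of Matlis duality (it commutes with finite direct sums) gives
\[
\matlis{\Tor_i(\divi,\divz)}\ =\ \bigoplus_{G\in\incoming{\divi}}\matlis{\Tor_{i-1}(G,\divz)}.
\]
Thus the statement for $(\divi,i)$ reduces to the statement for $(G,i-1)$ as $G$ ranges over $\incoming{\divi}$ — provided the indices stay in the valid range. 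Iterating, everything reduces to the case where the $\Tor$-index has dropped to $1$, which was handled in the previous paragraph using \autoref{tor1} and \autoref{matlis:ext1}; the bookkeeping is that $\Ext^{i+2}$ loses two degrees of recursion exactly as $\Tor_i$ loses one, which is why the base case is $\Tor_1$ versus $\Ext^3$ rather than $\Ext^2$.

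The main obstacle — and the only genuinely non-formal point — is the interaction between the recursion and the shift by $\canonical$: one must make sure that when \autoref{recursion} is applied to $\Ext^\bullet(\divi,\canonical-\divz)$, the second argument $\canonical-\divz$ is held fixed while $\divi$ is replaced by the elements of $\incoming{\divi}$, and symmetrically that on the $\Tor$-side $\divz$ is held fixed. This is exactly how \autoref{recursion} is stated (the first slot carries the recursion, the second is a passive parameter), so no divisor arithmetic involving $\canonical$ actually occurs inside the induction; the canonical twist enters only once, in the base case \autoref{matlis:ext1}. A secondary routine point is checking that Matlis duality is exact and commutes with the relevant direct sums in this graded setting, which follows from \autoref{matlis:short} together with the fact that $\field[-\sigv\cap\latv]$ is injective; I would state this as a one-line lemma or cite \cite{sturmfelsmiller}. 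With those two observations in place the proof is a short formal induction.

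\begin{proof}
We may assume all modules involved are finitely generated and $\latv$-graded with finite-dimensional graded pieces, so that $\matlis{(-)}$ is an exact involution commuting with finite direct sums (see \autoref{matlis:short} and \cite{sturmfelsmiller}). We induct on $i$.

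\emph{Base case $i=1$.} By \autoref{recursion} applied twice in the first argument,
\[
\Ext^{3}(\divi,\canonical-\divz)\ =\ \bigoplus_{G\in\incoming{\divi}}\ \bigoplus_{G'\in\incoming{G}}\Ext^{1}(G',\canonical-\divz).
\]
By \autoref{matlis:ext1} and the involutivity of $\matlis{(-)}$, each summand equals $\matlis{\short{\tor(G',\divz)}}$. Pulling the duality out of the direct sum and comparing with \autoref{tor1}, which states
\[
\Tor_1(\divi,\divz)\ =\ \bigoplus_{G\in\incoming{\divi}}\ \bigoplus_{G'\in\incoming{G}}\short{\tor(G',\divz)},
\]
we conclude $\Ext^{3}(\divi,\canonical-\divz)=\matlis{\Tor_1(\divi,\divz)}$.

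\emph{Inductive step.} Let $i>1$ and assume the claim for $i-1$ and all Weil divisors in the first argument. Applying \autoref{recursion} once in the first argument on each side,
\[
\Ext^{i+2}(\divi,\canonical-\divz)\ =\ \bigoplus_{G\in\incoming{\divi}}\Ext^{i+1}(G,\canonical-\divz)\ =\ \bigoplus_{G\in\incoming{\divi}}\matlis{\Tor_{i-1}(G,\divz)},
\]
where the second equality is the induction hypothesis (note $i-1\ge 1$). Since $\matlis{(-)}$ commutes with finite direct sums and, by \autoref{recursion}, $\Tor_i(\divi,\divz)=\bigoplus_{G\in\incoming{\divi}}\Tor_{i-1}(G,\divz)$, the right-hand side equals $\matlis{\Tor_i(\divi,\divz)}$. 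This completes the induction.
\end{proof}
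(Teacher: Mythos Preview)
Your proof is correct and follows essentially the same approach as the paper: the paper isolates the case $i=1$ as a separate lemma (\autoref{ext3tor1}), proving it exactly as you do by combining \autoref{tor1}, \autoref{matlis:ext1}, and two applications of the $\Ext$-recursion, and then deduces the general case by invoking \autoref{recursion} on both sides. Your explicit induction merely unpacks what the paper compresses into one sentence.
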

\begin{proof}
Using the recursion of \autoref{recursion} on both sides this is a consequence of \autoref{ext3tor1} below.
\end{proof}

\begin{lemma}\label{ext3tor1}
Given two Weil divisors $\divi$ and $\divz$, we have the following equality of $R$-modules:
\[
\Ext^{3}(\divi,\canonical-\divz)\ =\ \matlis{\Tor_1(\divi,\divz)}.
\]
\end{lemma}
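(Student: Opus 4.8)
The plan is to reduce $\Ext^3(\divi,\canonical-\divz)$ to a sum of $\Ext^1$-groups via the recursion of \autoref{recursion}, then match each summand against the summands of $\matlis{\Tor_1(\divi,\divz)}$ produced by \autoref{tor1} and \autoref{matlis:short}. Concretely, applying \autoref{recursion} twice gives
\[
\Ext^3(\divi,\canonical-\divz)\ =\ \bigoplus_{\rundiv^i\in\incoming{\divi}}\bigoplus_{\rundiv^{ij}\in\incoming{\rundiv^i}}\Ext^1(\rundiv^{ij},\canonical-\divz),
\]
exactly the double-indexed sum over the quiver $\resg$ that appears in \autoref{tor1}. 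On the other side, \autoref{tor1} gives $\Tor_1(\divi,\divz)=\bigoplus_{\rundiv^i}\bigoplus_{\rundiv^{ij}}\short{\tor(\rundiv^{ij},\divz)}$, and since Matlis duality commutes with finite direct sums, $\matlis{\Tor_1(\divi,\divz)}=\bigoplus_{\rundiv^i}\bigoplus_{\rundiv^{ij}}\matlis{\short{\tor(\rundiv^{ij},\divz)}}$.

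So it suffices to prove the term-by-term identity $\Ext^1(\rundiv,\canonical-\divz)=\matlis{\short{\tor(\rundiv,\divz)}}$ for an arbitrary Weil divisor $\rundiv$ — but that is precisely \autoref{matlis:ext1}, which states $\matlis{\Ext^1(\divi,\canonical-\divz)}=\short{\tor(\divi,\divz)}$. Dualizing both sides of \autoref{matlis:ext1} and using that Matlis dual is an involution on the relevant (finite-length, $\latv$-graded) modules — equivalently, directly reading \autoref{matlis:short} backwards, $\matlis{\short{\tor(\rundiv,\divz)}}=\short{-\tor(\rundiv,\divz)}$, and comparing with $\Ext^1(\rundiv,\canonical-\divz)=\short{\ext(\rundiv,\canonical-\divz)}$ via the computation $\tor(\rundiv,\divz)\cap\latv=-\ext(\rundiv,\canonical-\divz)\cap\latv$ already carried out inside the proof of \autoref{matlis:ext1} — yields the per-summand equality. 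Assembling the two double sums then gives the lemma.

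The only genuine subtlety is bookkeeping: one must check that the indexing set $\{\rundiv^{ij}\}$ produced by iterating $\incoming{-}$ for $\Ext$ (i.e. applying \autoref{ses} and the long exact sequence twice, as in \autoref{recursion}) is literally the same multiset, with the same lattice shifts, as the one appearing in the free resolution used in the proof of \autoref{tor1}; this is immediate because both are built by applying \autoref{ses} recursively via the same quiver $\resg$ with the same label-shifting convention in $\incoming{-}$. I expect this indexing compatibility to be the main (and essentially only) obstacle; once it is in place the statement is a formal consequence of \autoref{recursion}, \autoref{tor1}, and \autoref{matlis:ext1}.
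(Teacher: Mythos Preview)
Your proposal is correct and follows essentially the same route as the paper: both arguments combine the recursion of \autoref{recursion} (applied twice to reduce $\Ext^3$ to a double sum of $\Ext^1$-terms indexed by $\incoming{\incoming{\divi}}$), the decomposition of $\Tor_1$ from \autoref{tor1}, and the per-summand identification from \autoref{matlis:ext1}. The paper presents the chain starting from $\Tor_1$ and ending with the recursion, while you start from $\Ext^3$; the indexing concern you flag is indeed a non-issue since both double sums come from the same quiver $\resg$.
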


\begin{proof}
First, we use the description of $\Tor_1$ developed in \autoref{tor1}. Then we
insert the formula for the Matlis dual of $\Ext^1$ of \autoref{matlis:ext1}.
\begin{align*}
\Tor_1(\divi,\divz)\ &=
\ \bigoplus_{\rundiv^i\in\incoming{\divi}}
\left(\bigoplus_{\rundiv^{ij}\in\incoming{\rundiv^i}}
\short{\tor(\rundiv^{ij},\divz)}
\right)\\
&\cong\ \bigoplus_{\rundiv^i\in\incoming{\divi}}
\left(\bigoplus_{\rundiv^{ij}\in\incoming{\rundiv^i}}
\matlis{\Ext^1(\rundiv^{ij},\canonical-\divz)}
\right).
\end{align*}
Applying the recursion formula of \autoref{recursion} for $\Ext^3$, we obtain the desired result.
\end{proof}

Note that although \autoref{exttor} shows the symmetry of \autoref{symmetry}
for $i>2$, it does not generalize \autoref{symmetry}, as it does not imply the
$i=1$ case.  In particular, the case $i=2$ remains open.

\printbibliography

\end{document}